\newcommand{\C}{\mathbb C}
\newcommand{\D}{\mathbb D}
\newcommand{\N}{\mathbb N}
\newcommand{\R}{\mathbb R}
\newcommand{\sS}{\mathbb S}
\renewcommand{\H}{\mathbb H}
\renewcommand{\P}{\mathbb P}
\newcommand{\Z}{\mathbb Z}
\newcommand{\cF}{\mathcal F}
\newcommand{\re}{\mathrm{Re}}
\newcommand{\im}{\mathrm{Im}}
\newcommand{\g}{\gamma}
\newcommand{\h}{\vartheta}
\newcommand{\G}{\mathit{\Gamma}}
\renewcommand{\i}{\mathbf{i}}
\newcommand{\la}{\langle}
\newcommand{\ra}{\rangle}
\newcommand{\tr}{\;^t}
\newcommand{\pr}{\mathrm{pr}}
\newcommand{\comment}[1]{}
\newcommand{\ds}[1]{\displaystyle{#1}}
\title[An analogy of Jacobi's formula]
{An analogy of Jacobi's formula and its applications}
 \author{Chiba Jun}
 \address[Chiba]{NEC Corporation, 
7-1, Shiba 5-chome Minato-ku, Tokyo 108-8001, Japan}
\email{jun.chiba323@nec.com}
\author{Matsumoto Keiji}
\address[Matsumoto]
{Department of Mathematics,
Faculty of Science,
Hokkaido University,
Sapporo 060-0810, Japan
}
 \email{matsu@math.sci.hokudai.ac.jp}
\keywords{Hypergeometric Function, Theta constants, 
Mean iteration.}
\subjclass[2010]{Primary 33C05; Secondary 14K25, 33C90.}
\date{\today}
\theoremstyle{plain} 
\newtheorem{theorem}{\indent\sc Theorem}[section]
\newtheorem{lemma}[theorem]{\indent\sc Lemma}
\newtheorem{cor}[theorem]{\indent\sc Corollary}
\newtheorem{proposition}[theorem]{\indent\sc Proposition}
\theoremstyle{definition} 
\newtheorem{definition}[theorem]{\indent\sc Definition}
\newtheorem{fact}[theorem]{\indent\sc Fact}
\newtheorem{remark}[theorem]{\indent\sc Remark}
\newtheorem{example}[theorem]{\indent\sc Example}
\numberwithin{equation}{section}
\newcommand{\itref}[1]{$(\ref{#1})$}
\begin{document}
\maketitle

\begin{abstract}
We give an analogy of Jacobi's formula, which relates the hypergeometric 
function with parameters $(1/4,1/4,1)$ and theta constants. 
By using this analogy and twice formulas of theta constants, 
we obtain a transformation formula for this hypergeometric function. 
As its application, 
we express the limit of a pair of sequences defined by a mean iteration 
by this hypergeometric function. 
\end{abstract}

\section{Introduction}
Jacobi's formula in elliptic function theory is an equality 
$$F(\frac{1}{2},\frac{1}{2},1;\lambda(\tau))=\h_{00}(\tau)^2,
\quad \lambda(\tau)=\frac{\h_{10}(\tau(z))^4}{\h_{00}(\tau(z))^4},
$$
as holomorphic functions on the upper half plane 
$\H=\{\tau\in \C\mid \im(\tau)>0\}$,   
where $F(a,b,c;z)$ and $\h_{pq}(\tau)$ denote 
the hypergeometric series on $\{z\in \C\mid |z|<1\}$ 
and the theta constant with characteristics $p,q\in \{0,1\}$ on $\H$, 
respectively;  refer to \eqref{def:HGS} and \eqref{eq:theta} 
for their definitions. 
This formula is applied to the study of the arithmetic-geometric mean 
as shown in \cite{BB}, and generalized to Thomae's formula in \cite{Th}. 
In the present, there are several kinds of its analogies, 
which are applied to studies of mean iterations, 
refer to \cite{BB}, \cite{MS}, \cite{MT}, \cite{Sh} and the references therein.

In this paper, we give an analogy of Jacobi's formula, which is an equality 
\begin{equation*}
\label{eq:exJacobi}
F(\frac{1}{4},\frac{1}{4},1;\zeta(\tau))=\h_{00}(\tau)^2,
\quad 
\zeta(\tau)=\frac{4\h_{01}(\tau)^4\h_{10}(\tau)^4}{\h_{00}(\tau)^8},
\end{equation*}
as holomorphic functions on $\H$.
Strictly, we initially give this equality on a fundamental region $\D_{12}$ 
of 
$$\Gamma_{12}=\{g=g_{ij}\in \mathrm{SL}_2(\Z)\mid 
g_{11}g_{12},g_{21}g_{22}\equiv 0 \bmod 2\}$$ 
acting on $\H$, and extend it to 
the whole space $\H$. 
Though this equality is naturally extended to $\H$ 
by the simply connectedness of $\H$,  
$F(\frac{1}{4},\frac{1}{4},1;z)$ is not regarded as single valued 
when $\tau$ runs over $\H$. To understand its behavior well, 
we investigate the monodromy representation of 
the hypergeometric differential equation $\cF(\frac{1}{4},\frac{1}{4},1)$. 
It is realized in not $\mathrm{SL}_2(\Z)$ but $\mathrm{GL}_2(\Z[\i])$, 
and its  projectivization becomes 
$\mathrm{P}\Gamma_{12}=\Gamma_{12}/\{\pm I_2\}$,    
where $\i=\sqrt{-1}$ and $I_2$ is the unit matrix of size $2$.
By using circuit matrices in $\mathrm{GL}_2(\Z[\i])$, we can simplify 
not only the formula for $\h_{00}(\tau)$ with respect to the action 
$\mathrm{P}\Gamma_{12}$ in Fact \ref{fact:trans-theta} but also  
a description of the behavior of $F(\frac{1}{4},\frac{1}{4},1;\tau(z))$; 
for details refer to Lemma \ref{lem:factorG2} 
and Corollary \ref{cor:extend-J-1/4}. 
We also remark that 
the monodromy representation of $\cF(\frac{1}{2},\frac{1}{2},1)$ 
is realized as  
$$\Gamma(2,4)=\{g=(g_{ij})\in \mathrm{SL}_2(\Z)\mid 
g_{11},\g_{22}\equiv 1 \bmod 4,  g_{12},g_{21}\equiv 0\bmod 2\},$$  
which is a subgroup in 
$\Gamma(2)=\{g\in \mathrm{SL}_2(\Z)\mid g\equiv I_2\bmod 2\}$ 
of index $2$, 
and that there is a similar advantage 
in using not $\mathrm{P}\Gamma(2)=\Gamma(2)/\{\pm I_2\}$ 
but $\Gamma(2,4)$ for Jacobi's formula, 
see Lemma \ref{lem:factorG2} and Corollary \ref{cor:Jacobi-formula}.

By using our analogy of Jacobi's formula together with twice formulas  
for theta constants in \eqref{eq:2tau},  
we obtain a transformation formula 
for $F(\frac{1}{4},\frac{1}{4},1;z)$ in Theorem \ref{th:trans}. 
As studied in \cite{Go}, \cite{HKM}, \cite{KS1}, \cite{KS2} and \cite{Ma}, 
some transformation formulas for hypergeometric functions 
are applied to expressing limits of mean iterations. 
By following the idea in \cite{HKM}, we define 
two functions on 
the set 
$$\sS_{(-1,\infty)}=\{(x,rx)\in \R^2\mid x>0,\ -1<r<\infty\}$$ by
$$\mu_1(x,y)=m_A(x,\mu_0(x,y)), \quad \mu_2=\nu(x,\mu_0(x,y)),
$$
where $m_A$, $m_G$ and  $m_H$ are the arithmetic, geometric and 
harmonic means, and 
$$\mu_0=m_G(x,m_A(x,y)),\quad \nu(x,y)=2m_H(x,y)-m_A(x,y).$$
We can easily see  that $\mu_1(x,y)$ is a mean on $\sS_{(-1,\infty)}$, 
however $\mu_2(x,y)$ is not, since the inequality
$\min(x,y)\le \mu_2(x,y)$
does not hold for any $(x,y)\in \sS_{(-1,\infty)}$. 
In spite of this situation, 
we can define a pair of sequences $\{x_n\}$ and $\{y_n\}$  
by the recurrence relation 
$$(x_{n+1},y_{n+1})=(\mu_1(x_n,y_n),\mu_2(x_n,y_n))$$ 
for any given initial term $(x_0,y_0)\in \sS_{(-1,\infty)}$,  
and show in Lemma \ref{lem:limit} that they converge as $n\to \infty$ and 
$\lim\limits_{n\to\infty}x_n=\lim\limits_{n\to\infty}y_n>0$.  
We express this limit by $F(\frac{1}{4},\frac{1}{4},1;z)$ 
in Theorem \ref{th:lim-HGS-rep}.

\section{Fundamental properties of $F(a,b,c;z)$ }
We begin with defining the hypergeometric series.
\begin{definition}
\label{def:HGS}
The hypergeometric series $F(a,b,c;z)$ is defined by 
\begin{equation}
\label{eq:HGS}
F(a,b,c;z)=\sum_{n=0}^\infty \frac{(a,n)(b,n)}{(c,n)(1,n)}z^n, 
\end{equation}
where $z$ is a complex variable,
$a,b,c$ are complex parameters with $c\ne 0,-1,-2,\cdots$, and 
$(a,n)=\G(a+n)/\G(a)=a(a+1)\cdots(a+n-1)$.
It converges absolutely and uniformly on any compact set 
in the unit disk $\{z\in \C\mid |z|<1\}$ for any fixed $a,b,c$.
\end{definition}

This series satisfies the hypergeometric differential equation 
$$\mathcal{F}(a,b,c):\big[z(1-z)\frac{d^2}{dz^2}+\{c-(a+b+1)z\}
\frac{d}{dz}-ab\big]\cdot f(z)=0,$$ 
which is a second order linear ordinary differential equation with 
regular singular points $z=0,1,\infty$. 
The space $\mathcal{S}(a,b,c;\dot z)$ of 
local solutions to $\mathcal{F}(a,b,c)$ around a point 
$\dot z\in Z=\C-\{0,1\}$ is a $2$-dimensional complex vector space, 
its element admits the analytic continuation along any path in $Z$.
In particular, a loop $\rho$ in $Z$ with a base point $\dot z$ leads to 
a linear transformation of $\mathcal{S}(a,b,c;\dot z)$. Thus,  we have 
a homomorphism from the fundamental group $\pi_1(Z,\dot z)$ to 
the general linear group $\mathrm{GL}(\mathcal{S}(a,b,c;\dot z))$ of 
$\mathcal{S}(a,b,c;\dot z)$, which is called the monodromy representation of 
$\cF(a,b,c)$.
We take $\dot z=\frac{1}{2}$ as a base point of $Z$, and 
loops $\rho_0$ and $\rho_1$ with terminal $\dot z$ as 
positively oriented circles with radius $\frac{1}{2}$ and 
center $0$ and $1$, respectively. 
Since $\pi_1(Z,\dot z)$ is a free group generated by $\rho_0$ and $\rho_1$, 
the monodromy representation of $\cF(a,b,c)$ is uniquely characterized 
by the images of $\rho_0$ and $\rho_1$.
We give their explicit forms 
with respect to a basis of $\mathcal{S}(a,b,c;\dot z)$ given by 
Euler type integrals.
\begin{fact}[{\cite[\S 17]{Ki}}]
\begin{enumerate}
\item 
If $\re(b),\re(c-b),\re(1-a)>0$ then the integrals 
$$f_1(z)=\int_1^{1/z} t^{b-1}(1-t)^{c-b-1}(1-tz)^{-a}dt,
\quad 
f_2(z)=\int_0^1 t^{b-1}(1-t)^{c-b-1}(1-tz)^{-a}dt
$$
converge and span the space $\mathcal{S}(a,b,c;\dot z)$. 
Here we assign a branch of the integrand  
$t^{b-1}(1-t)^{c-b-1}(1-tz)^{-a}$ on the open interval $(0,1)$ in the $t$-space 
by $\arg(t)=\arg(1-t)=\arg(1-tz)=0$ for real $z$ near to $\dot z=\frac{1}{2}$, 
and that on the open interval $(1,\frac{1}{z})$ by 
its analytic continuation via the lower half plane of the $t$-space, 
which transforms $\arg(1-t)$ from $0$ to $\pi$.  
Under some generic conditions on $a,b,c$,  
they admit expressions in terms of the hypergeometric series
\begin{align*}
f_1(z)&=e^{\pi\i(c-b-1)}B(c-b,1-a)(1-z)^{c-a-b}F(c-a,c-b,c-a-b+1;1-z),\\ 
&=\frac{e^{2\pi\i(c-b)}\!-\!e^{2\pi\i a}}{e^{2\pi\i a}\!-\!1}
\left(B(b,c\!-\!b)F(a,b,c;z)\!-\!B(b,c\!-\! a\!-\! b)
F(a,b,a\!+\! b\!-\! c\!+\!1;1\!-\! z)\right),\\
f_2(z)&=B(b,c-b)F(a,b,c;z),
\end{align*}
where $B$ denotes the beta function. 
\item 
The loops $\rho_0$ and $\rho_1$  
lead to linear transformations sending the 
basis $\mathbf{F}(z)=\tr(f_1(z),f_2(z))$ of $\mathcal{S}(a,b,c;\dot z)$ 
to $M_0 \mathbf{F}(z)$ and $M_1\mathbf{F}(z)$, where 
\begin{equation}
\label{eq:Cir-Mat}
M_0=
\begin{pmatrix}
e^{-2\pi\i c} & e^{-2\pi\i b}-1  \\  
0 & 1
\end{pmatrix},
\quad 
M_1=
\begin{pmatrix}
e^{2\pi\i(c-a-b)}   & 0\\
1-e^{-2\pi\i a} & 1
\end{pmatrix}.
\end{equation}
Here we exchange the role of $f_1(z)$ and $f_2(z)$ in \cite[\S 17]{Ki}, 
our matrices $M_0$ and $M_1$ are the conjugates of $A_0$ and $A_1$ 
in \cite[p.123]{Ki} by $U=\begin{pmatrix} 0 & 1 \\ 1 & 0\end{pmatrix}$,
respectively.
\end{enumerate}
\end{fact}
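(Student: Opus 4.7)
The plan is to verify each assertion using the Euler-integral machinery: convergence from the stated real-part conditions, the ODE by differentiation under the integral, the explicit series forms by two changes of variable, and the circuit matrices by tracking contour deformations.

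First I would establish convergence and that the integrals solve $\cF(a,b,c)$. The integrand of $f_2$ is $O(t^{\re(b)-1})$ near $t=0$ and $O((1-t)^{\re(c-b)-1})$ near $t=1$, so $\re(b),\re(c-b)>0$ suffices; for $f_1$, rewriting $(1-tz)^{-a}=(-z)^{-a}(t-1/z)^{-a}$ shows the endpoint $t=1/z$ contributes $O((t-1/z)^{-\re(a)})$, handled by $\re(1-a)>0$. Setting $\varphi(t,z)=t^{b-1}(1-t)^{c-b-1}(1-tz)^{-a}$, a direct computation gives
\begin{equation*}
\Bigl[z(1-z)\tfrac{\pa^2}{\pa z^2}+\bigl(c-(a+b+1)z\bigr)\tfrac{\pa}{\pa z}-ab\Bigr]\varphi
=\tfrac{\pa}{\pa t}\Bigl[a\,t^b(1-t)^{c-b}(1-tz)^{-a-1}\Bigr],
\end{equation*}
and the bracket on the right vanishes at both endpoints of each path under exactly the assumed conditions, so $f_1,f_2\in\mathcal{S}(a,b,c;\dot z)$.

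Next I would derive the series expressions. For $f_2$ I expand $(1-tz)^{-a}$ as a binomial series and integrate termwise against the beta kernel to obtain $B(b,c-b)F(a,b,c;z)$. For $f_1$, the substitution $t=1/s$ followed by $s=z+(1-z)u$, with the arguments of $1-t$ and $1-tz$ tracked through the lower half plane as prescribed, converts $f_1$ into a beta-type integral representing $B(c-b,1-a)(1-z)^{c-a-b}F(c-a,c-b,c-a-b+1;1-z)$; the phase $e^{\pi\i(c-b-1)}$ records the branch rotation of $(1-t)^{c-b-1}$ along the path. The second expression for $f_1$ then follows by inserting the Gauss connection formula between $F(c-a,c-b,c-a-b+1;1-z)$ and the pair $\{F(a,b,c;z),F(a,b,a+b-c+1;1-z)\}$, the coefficients simplifying via the gamma reflection identity.

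For part (ii), I would compute the action of $\rho_0$ and $\rho_1$ by contour deformation. Under $\rho_0$ the segment $[0,1]$ never meets $t=1/z$, so $f_2$ is invariant; the path of $f_1$ gets dragged around $t=1$, and detaching a small loop there converts that loop into $(e^{-2\pi\i b}-1)$ times an integral equal to $f_2$, while the rectified piece acquires the overall phase $e^{-2\pi\i c}$ from the monodromy of $t^{b-1}(1-tz)^{-a}$; this reproduces $M_0$. For $\rho_1$ it is cleaner to work with the second expression of $f_1$: the prefactor $(1-z)^{c-a-b}$ picks up $e^{2\pi\i(c-a-b)}$ while both $F(a,b,c;z)$ and $F(a,b,a+b-c+1;1-z)$ are locally single-valued at $z=1$, and re-expanding in $(f_1,f_2)$ yields $M_1$. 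Linear independence, hence that $f_1,f_2$ span $\mathcal{S}(a,b,c;\dot z)$, follows a posteriori since $M_0$ has distinct eigenvalues $e^{-2\pi\i c}$ and $1$ for generic $c$.

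The main obstacle will be the branch bookkeeping in the $\rho_0$ calculation: the prescription that $\arg(1-t)$ runs from $0$ to $\pi$ along the continuation of the integrand from $(0,1)$ to $(1,1/z)$ is what pins down the sign and phase of the wrap-around contribution, and confirming that it yields precisely $e^{-2\pi\i b}-1$ (not $1-e^{-2\pi\i b}$ or $e^{2\pi\i b}-1$) requires careful matching of orientation conventions with the branch of $(1-tz)^{-a}$ carried through the deformation.
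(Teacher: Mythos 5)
The paper does not prove this statement at all: it is quoted as a Fact from \cite[\S 17]{Ki}, so there is no internal argument to compare yours against. Your plan is the standard Euler-integral route and is essentially the derivation the cited source carries out: convergence from the exponent conditions, the differential equation from an exact-derivative identity, the series forms from a change of variables plus the Gauss connection formula, and the circuit matrices from contour deformation. As a plan it is the right one.

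Three concrete soft spots, in decreasing order of seriousness. (1) The exact-derivative identity is $L\varphi=\partial_t\bigl[-a\,t^{b}(1-t)^{c-b}(1-tz)^{-a-1}\bigr]$ (your sign is off, which is harmless), but the primitive does \emph{not} vanish at the endpoint $t=1/z$ under the stated hypothesis $\re(1-a)>0$: near that endpoint it behaves like $(t-1/z)^{-a-1}$, which tends to $0$ only when $\re(a)<-1$. So "the bracket vanishes at both endpoints under exactly the assumed conditions" is false for $f_1$; you must first prove $Lf_1=0$ for $\re(a)<-1$ and then extend by analytic continuation in the parameters (or integrate by parts, or use a regularized cycle). (2) The $\rho_0$ computation, which you correctly identify as the main obstacle, is also misattributed as sketched: a detached loop around $t=1$ multiplies the integrand by $e^{2\pi\i(c-b)}$, not $e^{-2\pi\i b}$; the coefficient $e^{-2\pi\i b}-1$ of $f_2$ arises from a negatively oriented loop detached at $t=0$, while the scalar $e^{-2\pi\i c}=e^{-2\pi\i b}\,e^{-2\pi\i(c-b)}$ records the endpoint $1/z$ winding once clockwise around both $t=0$ and $t=1$ (note $\arg(1/z)$ decreases as $\arg z$ increases). (3) Your a posteriori argument that $f_1,f_2$ span $\mathcal{S}(a,b,c;\dot z)$ via the distinct eigenvalues of $M_0$ fails precisely when $c\in\Z$, which is the case $c=1$ used throughout this paper; independence is better read off from the local behaviour at $z=1$ (the first expression shows $f_1$ is a pure multiple of the $(1-z)^{c-a-b}$-branch, while the connection formula shows $f_2$ generically is not), consistently with the "generic conditions" caveat in the statement.
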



\begin{remark}
\label{rem:discontinuous}
We can make the analytic continuation of the hypergeometric series $F(a,b,c;z)$ 
to the simply connected domain $\C-[1,\infty)$ as a solution to $\cF(a,b,c)$.
We use the same symbol $F(a,b,c;z)$ for this continuation, 
which is a single-valued holomorphic function on $\C-[1,\infty)$. 
Moreover, by assigning $F(a,b,c;t)$ on $t\in (1,\infty)$ 
to the limit of $F(a,b,c;z)$ as $z\to t$ 
with $\im(z)>0$, we extend $F(a,b,c;z)$ to a single-valued function 
on $\C-\{1\}$, which is discontinuous along $(1,\infty)$ for general $a,b,c$.
For a path $\rho$ in $Z=\C-\{0,1\}$ ending at $w$,  
$F_\rho(a,b,c;w)$ denotes the continuation of $F(a,b,c;z)$ along $\rho$.
\end{remark}

Though we remove the point $z=1$ from $\C$ for $F(a,b,c;z)$ in 
Remark \ref{rem:discontinuous}, 
there is a formula for the limit of $F(a,b,c;z)$ as 
$z\to 1$ with $z$ in the unit disk.

\begin{fact}[Gauss-Kummer's identity {\cite[Theorem 3.3]{Ki}}]
If 
$\re(c-a-b)>0$ 
then 
\begin{equation}
\label{eq:G-K}
\lim_{z\to 1,|z|<1}F(a,b,c;z)=\frac{\G(c)\G(c-a-b)}{\G(c-a)\G(c-b)}.
\end{equation}
\end{fact}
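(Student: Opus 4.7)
The plan is to derive the identity from the Euler integral representation $f_2(z) = B(b,c-b) F(a,b,c;z)$ supplied by Fact, together with a dominated-convergence argument that passes the limit $z \to 1$ through the integral. The hypothesis $\re(c-a-b) > 0$ arises precisely as the integrability condition for the limiting integrand.

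First I would impose the auxiliary positivity conditions $\re(b), \re(c-b) > 0$ so that the integral representation
$$F(a,b,c;z) = \frac{1}{B(b,c-b)}\int_0^1 t^{b-1}(1-t)^{c-b-1}(1-tz)^{-a}\,dt$$
is valid for $z$ in the open unit disk, with $(1-tz)^{-a}$ taken as the principal branch (well defined because $\re(1-tz) = 1 - t\re(z) > 0$). Since $t\in[0,1]$ and $|z|<1$ imply $|1-tz| \ge 1-t|z| \ge 1-t$, when $\re(a) \ge 0$ one has $|(1-tz)^{-a}| \le (1-t)^{-\re(a)}$, so the integrand is dominated uniformly in $z$ by $t^{\re(b)-1}(1-t)^{\re(c-a-b)-1}$, which is Lebesgue integrable on $[0,1]$ precisely because $\re(c-a-b) > 0$. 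In the remaining case $\re(a) < 0$ the factor $|1-tz|^{-\re(a)}$ is bounded above by $2^{|\re(a)|}$, and integrability follows from $\re(b), \re(c-b) > 0$ alone.

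Applying dominated convergence then yields
$$\lim_{z \to 1,\, |z|<1} F(a,b,c;z) = \frac{1}{B(b,c-b)}\int_0^1 t^{b-1}(1-t)^{c-a-b-1}\,dt = \frac{B(b,c-a-b)}{B(b,c-b)},$$
and expanding beta functions via $B(x,y) = \G(x)\G(y)/\G(x+y)$ produces the stated right-hand side. To remove the auxiliary assumptions $\re(b), \re(c-b) > 0$, I would conclude by analytic continuation in the parameters $(a,b,c)$: both sides define meromorphic functions on the region $\{\re(c-a-b) > 0,\ c \notin \Z_{\le 0}\}$ (the left-hand side being holomorphic there via Abel's theorem applied to the series, which converges at $z=1$ under $\re(c-a-b) > 0$), so agreement on the open subset where additionally $\re(b), \re(c-b) > 0$ forces agreement throughout. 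The principal subtlety is justifying dominated convergence as $z$ approaches $1$ non-tangentially through complex values in the unit disk, but this is handled uniformly by the elementary chord bound $|1-tz| \ge 1-t|z|$, which is independent of $\arg(1-z)$.
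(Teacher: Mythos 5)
Your proof is correct, but note that the paper does not prove this statement at all: it is recorded as a Fact with a citation to Kimura's lecture notes, so there is no internal argument to compare against. Your route --- Euler's integral $f_2(z)=B(b,c-b)F(a,b,c;z)$ from the paper's own Fact in Section 2, the chord bound $|1-tz|\ge 1-t|z|\ge 1-t$ to get a dominating function $t^{\re(b)-1}(1-t)^{\re(c-a-b)-1}$, dominated convergence, and then analytic continuation in the parameters to drop the auxiliary hypotheses $\re(b),\re(c-b)>0$ --- is the standard textbook proof of Gauss's summation theorem and is sound; the hypothesis $\re(c-a-b)>0$ does enter exactly as the integrability condition at $t=1$, and your continuation step is legitimate because under $\re(c-a-b)>0$ the series converges absolutely and uniformly on the closed disk, so the left-hand side is the value of a function holomorphic in $(a,b,c)$ on the connected region $\{\re(c-a-b)>0,\ c\notin\Z_{\le 0}\}$. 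Two small points worth tightening: for non-real $a$ the estimate $|(1-tz)^{-a}|\le(1-t)^{-\re(a)}$ acquires an extra bounded factor $e^{|\im(a)|\,|\arg(1-tz)|}\le e^{\pi|\im(a)|/2}$ (harmless since $\re(1-tz)>0$, but it should be said); and your closing remark about approaching $1$ ``non-tangentially'' is unnecessary --- your own bound is uniform over the whole disk, which is what the unrestricted limit $z\to 1$, $|z|<1$ in the statement requires.
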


For the basis $\mathbf{F}(z)=\tr(f_1(z),f_2(z))$ of 
$\mathcal{S}(a,b,c;\dot z)$,  
we have a map from a neighborhood of $\dot z$ to the complex projective
line $\P^1$ by the ratio $f_1(z)/f_2(z)$ of $f_1(z)$ and $f_2(z)$.  
Schwarz's map is define by its analytic continuation to $\C-\{0,1\}$, 
which is multi valued in general. 

\begin{fact}
\label{fact:Schwarz}
If the parameters $a,b,c$ are real and satisfy
$$\frac{1}{|1-c|},\frac{1}{|c-a-b|},\frac{1}{|a-b|}\in 
\{2,3,4\dots\}\cup\{\infty\},\quad 
|1-c|+|c-a-b|+|a-b|<1,$$
then the image of Schwarz's map is isomorphic to an open dense subset of 
the upper half plane $\H=\{\tau\in \C\mid \im(\tau)>0\}$ and its 
inverse is single valued. The projectivization of the image of its monodromy 
representation is conjugate to a discrete subgroup of 
$\mathrm{PSL}_2(\R)=\mathrm{SL}_2(\R)/\{\pm I_2\}$ generated 
by two elements $g_0$ and $g_1$ with 
$$\mathrm{ord}(g_0)=\frac{1}{|1-c|}, \quad 
\mathrm{ord}(g_1)=\frac{1}{|c-a-b|}, \quad 
\mathrm{ord}(g_0^{-1}g_1^{-1})=\frac{1}{|a-b|},
$$
where $I_2$ is the unit matrix of size $2$, and    
$\mathrm{ord}(g_0)$ denotes the order of $g_0$.
\end{fact}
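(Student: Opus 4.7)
The plan is to follow the classical Schwarz theory for the hypergeometric differential equation. First, I would analyze the local behavior of solutions to $\cF(a,b,c)$ at each of the three regular singular points $z=0,1,\infty$. The characteristic exponents are $\{0,1-c\}$ at $z=0$, $\{0,c-a-b\}$ at $z=1$, and $\{a,b\}$ at $z=\infty$, so the exponent differences are $|1-c|$, $|c-a-b|$, $|a-b|$, respectively. Under the given reality assumptions on $a,b,c$, a suitable choice of basis makes Schwarz's ratio $s(z)=f_1(z)/f_2(z)$ locally of the form $z^{1-c}h_0(z)$ near $0$ with $h_0$ non-vanishing holomorphic, and similar normal forms at $1$ and $\infty$.

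Second, I would apply the Schwarz reflection principle. After replacing the basis by a real one, $s(z)$ is real on each of the intervals $(-\infty,0)$, $(0,1)$, $(1,\infty)$, hence maps each of the upper and lower half planes conformally onto a region bounded by three circular arcs meeting at the images of $0,1,\infty$. The local normal form shows that the interior angle at the vertex coming from $0$ equals exactly $\pi|1-c|$, since $z\mapsto z^{1-c}$ sends a half-disk to a circular sector of that opening; the same argument at $1$ and $\infty$ gives angles $\pi|c-a-b|$ and $\pi|a-b|$. The resulting region is a \emph{Schwarz triangle}. The hypothesis $|1-c|+|c-a-b|+|a-b|<1$ forces the angle sum to be strictly less than $\pi$, so after a Möbius normalization of the image, the triangle is a hyperbolic triangle lying in $\H$.

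Third, I would extend $s$ by iterated Schwarz reflection across the circular sides; each double reflection is a Möbius transformation realizing an element of the projective monodromy group on the image side. The hypothesis that $1/|1-c|$, $1/|c-a-b|$, $1/|a-b|$ all lie in $\{2,3,4,\dots\}\cup\{\infty\}$ ensures that the reflected copies close up consistently around each vertex (the $\infty$ case producing a parabolic cusp rather than an elliptic vertex). By Poincaré's polygon theorem applied to this hyperbolic triangle, the reflected copies tile an open dense subset of $\H$ without overlap, which is precisely the statement that the inverse of Schwarz's map is single valued on that dense open subset. The orientation-preserving subgroup of index two in the reflection group is then the projectivized monodromy group, and the rotations $g_0,g_1$ about the two finite vertices have the prescribed orders, while $g_0^{-1}g_1^{-1}$ corresponds to the remaining vertex at the image of $\infty$. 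The main obstacle is the global injectivity of the inverse map, i.e.\ verifying that no two reflected triangles overlap, which is exactly where the reciprocal-integer condition on the exponent differences is essential and where Poincaré's theorem does the heavy lifting.
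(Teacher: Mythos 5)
The paper gives no proof of this statement: it is quoted as a classical Fact (Schwarz's theorem on the hypergeometric equation and triangle groups, found e.g.\ in \cite{Yo}), so there is no in-paper argument to compare yours against. Your sketch is the standard proof of that classical theorem and is correct in outline: the exponent differences $|1-c|,|c-a-b|,|a-b|$ give the angles of the Schwarz triangle, the angle-sum hypothesis makes it hyperbolic, the reciprocal-integer hypothesis is exactly what lets Poincar\'e's polygon theorem produce a non-overlapping tiling (with cusps where the reciprocal is $\infty$), and the orientation-preserving index-two subgroup of the reflection group is the projectivized monodromy group, generated by the rotations $g_0,g_1$ about two vertices with the stated orders. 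Two small points worth tightening: with a single real basis the ratio $s(z)=f_1(z)/f_2(z)$ is real on only one of the three intervals $(-\infty,0)$, $(0,1)$, $(1,\infty)$; on the other two it traces circular arcs (images of $\R\cup\{\infty\}$ under the real monodromy matrices), which is all you need for the three-arc boundary but is not quite what you wrote. And the injectivity of $s$ on the open upper half plane (so that it is a conformal map onto the open triangle, a prerequisite for iterating the reflection) should be justified, e.g.\ by the argument principle applied to the boundary curve; it is not automatic from the local normal forms alone.
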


\begin{example}
\label{ex:(1/2,1/2,1)}
If $(a,b,c)=(\frac{1}{2},\frac{1}{2},1)$ then we have 
$\big(\frac{1}{|1-c|},\frac{1}{|c-a-b|},\frac{1}{|a-b|}\big)=
(\infty,\infty,\infty)$, 
$$
M_0=\begin{pmatrix}
1 & -2 \\
0 & 1
\end{pmatrix},\quad 
M_1
=\begin{pmatrix}
1 & 0 \\
2 & 1
\end{pmatrix},\quad 
M_0^{-1}M_1^{-1}
=\begin{pmatrix}
-3 & 2 \\
-2 & 1
\end{pmatrix}.
$$
The group generated by these matrices is not 
the principal congruence subgroup $\Gamma(2)$ of level $2$ in $\mathrm{SL}_2(\Z)$ but
$$\Gamma(2,4)=\{g=(g_{ij})\in \mathrm{SL}_2(\Z)\mid g_{11},g_{22}\equiv 1\bmod 4, 
g_{12},g_{21}\equiv 0\bmod 2\}.$$
Note that $\Gamma(2,4)$ is a subgroup in $\Gamma(2)$ of index $2$, 
and that $\pm I_2$ are representatives of the quotient 
$\Gamma(2)/\Gamma(2,4)$. 

Since the image of the monodromy representation is in $\mathrm{PSL}_2(\R)$ 
and $f_1(z)/f_2(z)$ is in $\H$ for $z\in (0,1)$, 
the image of the analytic continuation of $f_1(z)/f_2(z)$ to $Z=\C-\{0,1\}$ 
is the whole space $\H$. 
\end{example}

\begin{example}
\label{ex:(1/4,1/4,1)}
If $(a,b,c)=(\frac{1}{4},\frac{1}{4},1)$ then we have 
$\big(\frac{1}{|1-c|},\frac{1}{|c-a-b|},\frac{1}{|a-b|}\big)=
(\infty,2,\infty)$, 
$$
M_0
=\begin{pmatrix}
1 & -1-\i \\
0 & 1
\end{pmatrix},\quad 
M_1
=\begin{pmatrix}
-1 & 0 \\
1+\i & 1
\end{pmatrix},\quad 
M_0^{-1}M_1^{-1}
=\begin{pmatrix}
-1+2\i & 1+\i \\
1+\i & 1
\end{pmatrix}.
$$
By using 
$$P=\begin{pmatrix}
-1+\i & \i\\ 
0& 1
\end{pmatrix}, 
$$
we change the basis 
$\mathbf{F}(z)$ into $P\mathbf{F}(z)$.  
Then 
$$
PM_0P^{-1}
=\begin{pmatrix}
1 & 2 \\
0 & 1
\end{pmatrix},\quad 
PM_1P^{-1}=\begin{pmatrix}
0 & \i \\
-\i & 0
\end{pmatrix}=\i J,\quad 
PM_0^{-1}M_1^{-1}P^{-1}=\begin{pmatrix}
2\i & \i \\
-\i & 0
\end{pmatrix}.
$$
The group generated by these matrices is 
$$
\Gamma(2,4)\la \i J\ra
=
\{g\in \mathrm{GL}_2(\Z[\i])\mid g\in \Gamma(2,4) \textrm{ or } 
(\i J)g\in \Gamma(2,4)\}=\Gamma(2,4)\cup (\i J)\cdot \Gamma(2,4)
$$
since 
$$\begin{pmatrix}
0 & \i \\
-\i & 0
\end{pmatrix}
\begin{pmatrix}
1 & 2 \\
0 & 1
\end{pmatrix}
\begin{pmatrix}
0 & \i \\
-\i & 0
\end{pmatrix}=\begin{pmatrix}
1 & 0 \\
-2 & 1
\end{pmatrix}. 
$$
Note that the projectivization of $\Gamma(2,4)\la \i J\ra$ is 
isomorphic to that of 
$$\Gamma_{12}=\{g=(g_{ij})\in \mathrm{SL}_2(\Z)\mid g_{11}g_{12}\equiv 
g_{21}g_{22}\equiv 0 \bmod 2\}.
$$

Since the image of the monodromy representation is in $\mathrm{PSL}_2(\R)$ 
and $\big((-1+\i)f_1(z)+\i f_2(z)\big)/f_2(z)$ is in $\H$ for $z\in (0,1)$, 
the image of the analytic continuation of $f_2(z)/f_1(z)$ to $Z=\C-\{0,1\}$ 
is a open dense subset of $\H$. 
\end{example}

It is known that $F(a,b,c;z)$ satisfies several kinds of 
transformation formulas with respect to variable changes. 
In this paper, we use the following.

\begin{fact}[{\cite[(24) in p.64, (18),(19) in p.112 of Vol.I]{Er}}]
\begin{align}
\label{eq:FE-Gauss1}
F(a,b,2b;\frac{4z}{(1+z)^2})&
=(1+z)^{2a}F(a,a-b+\frac{1}{2},b+\frac{1}{2};z^2),\\
\label{eq:FE-Gauss2}
F(a,b,\frac{a+b+1}{2};z)&=F(\frac{a}{2},\frac{b}{2},\frac{a+b+1}{2};4z(1-z))\\
\label{eq:FE-Gauss3}
&=(1-2z)F(\frac{a+1}{2},\frac{b+1}{2},\frac{a+b+1}{2};4z(1-z)),
\end{align}
where $z$ is sufficiently near to $0$ and 
$(1+z)^{2a}$ takes the value $1$ at $z=0$.
\end{fact}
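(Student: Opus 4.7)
Each identity is a local statement near $z=0$ asserting that two explicit holomorphic functions of $z$ agree on a neighborhood of the origin. My strategy for all three is the standard one: show that both sides satisfy a common second-order linear ODE with a regular singular point at $z=0$, show that both sides are holomorphic at $z=0$ and take the value $1$ there, and then invoke Cauchy's uniqueness theorem for the distinguished holomorphic solution whose exponent at $0$ is $0$.

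For \eqref{eq:FE-Gauss1}, I substitute $w=\tfrac{4z}{(1+z)^2}$, using the identity $1-w=\tfrac{(1-z)^2}{(1+z)^2}$. Pulling the hypergeometric equation $\cF(a,b,2b)$ back along $w(z)$ via the chain rule gives a second-order linear operator in $z$ with regular singularities at $z=0,1,-1,\infty$. The singularity at $z=-1$ is \emph{apparent} in the sense that it can be removed by conjugating with the gauge factor $(1+z)^{-2a}$. After this conjugation the operator is even in $z$, hence descends under $u=z^2$ to a second-order operator in $u$ with singularities at $u=0,1,\infty$, which I check (by matching local exponents at these three points and the accessory parameters) to be the hypergeometric operator $\cF(a, a-b+\tfrac{1}{2}, b+\tfrac{1}{2})$. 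Both sides of \eqref{eq:FE-Gauss1} are holomorphic at $z=0$ with value $1$, so they agree in a neighborhood of $0$.

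For \eqref{eq:FE-Gauss2} and \eqref{eq:FE-Gauss3}, I substitute $w=4z(1-z)$, with the key identity $1-w=(1-2z)^2$. Pulling $\cF(\tfrac{a}{2},\tfrac{b}{2},\tfrac{a+b+1}{2})$ back along this double cover gives a linear second-order operator in $z$; after matching local exponents at $z=0,1,\infty$, I identify it with $\cF(a,b,\tfrac{a+b+1}{2})$ (the crucial numerology being $c-a-b=\tfrac{1}{2}$, which matches the branching exponent coming from the square root of $1-w$). This proves \eqref{eq:FE-Gauss2}, since both sides are holomorphic at $z=0$ with value $1$. For \eqref{eq:FE-Gauss3}, I conjugate the pulled-back operator attached to $\cF(\tfrac{a+1}{2},\tfrac{b+1}{2},\tfrac{a+b+1}{2})$ by the gauge factor $(1-2z)$ (an explicit single-valued square root of $1-w$ near $z=0$) and verify similarly that the conjugated operator coincides with $\cF(a,b,\tfrac{a+b+1}{2})$. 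Both sides of \eqref{eq:FE-Gauss3} are holomorphic at $z=0$ with value $1$, and a brief power-series check confirms that their linear coefficients both equal $\tfrac{2ab}{a+b+1}$, so uniqueness closes the argument.

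The main obstacle is the identification of the pulled-back operators with the claimed hypergeometric operators: while this is a routine computation of coefficients, it requires careful tracking of the change-of-variable formulas together with the gauge conjugations, and one must verify that the accessory parameters match and not only the local exponents. A more conceptual shortcut is to appeal to Goursat's classification, which tells us that the only algebraic substitutions preserving the Gauss-hypergeometric class are the Möbius images of $w=\tfrac{4z}{(1+z)^2}$ and $w=4z(1-z)$, and the induced parameter transformation is dictated by matching local exponents at the three singular points.
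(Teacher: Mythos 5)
The paper offers no proof of this Fact: it is quoted directly from Erd\'elyi et al.\ with a page reference, so there is no internal argument to measure yours against. Your outline is the standard classical derivation of the quadratic transformations (pull the hypergeometric operator back along the degree-two rational map, remove the apparent singularity by a gauge factor, identify the resulting operator with the target hypergeometric operator, and conclude by uniqueness of the holomorphic solution normalized to $1$ at the origin), and it is sound. Two points deserve tightening. First, the uniqueness step is not literally Cauchy's theorem, since $z=0$ is a regular singular point of every operator involved; what you actually use is the Frobenius recursion $(n+1)(n+c)a_{n+1}=(n+a)(n+b)a_n$, which determines the holomorphic solution from its value at $0$ provided $c\notin\{0,-1,-2,\dots\}$ (equivalently, the second exponent $1-c$ is not a positive integer, in which case a second holomorphic solution vanishing at $0$ could exist and matching the constant term alone would not suffice); for the remaining parameter values one extends by analyticity in $(a,b)$. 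Second, the appeal to Goursat is overstated as phrased: there are also cubic and higher-degree algebraic transformations for special parameters, so the classification only pins down the quadratic substitutions up to M\"obius equivalence; since you use this only as a heuristic, it does not affect the argument. The substantive content --- verifying that the pulled-back, gauge-conjugated operators have not only the correct local exponents but the correct accessory parameter, i.e.\ literally coincide with $\cF(a,a-b+\frac{1}{2},b+\frac{1}{2})$ and $\cF(a,b,\frac{a+b+1}{2})$ --- is asserted rather than carried out, but it is a finite computation and the surrounding logic, including the coefficient check you perform for \eqref{eq:FE-Gauss3}, is correct.
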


\section{Fundamental properties of $\h_{pq}(\tau)$}
We next introduce theta constants and their properties by referring to 
\cite{Er} and \cite{Mu}.
\begin{definition}
The theta constant $\h_{pq}(\tau)$ with characteristics $p,q$  
is defined by 
\begin{equation}
\label{eq:theta}
\h_{pq}(\tau)=\sum_{n=-\infty}^\infty \exp\big(\pi\i(n+\frac{p}{2})^2\tau
+2\pi\i(n+\frac{p}{2})\frac{q}{2}\big),
\end{equation}
where 
$\tau$ is a variable in the upper half plane 
$\H$, and $p,q$ are parameters taking the value $0$ or $1$.
It converges absolutely and uniformly on any compact set in 
$\H$ for any fixed $p,q$.
\end{definition}

There are four functions $\h_{00}(\tau)$, $\h_{01}(\tau)$, 
$\h_{10}(\tau)$, $\h_{11}(\tau)$; 
the last one vanishes identically on $\H$, and the rests 
satisfy  Jacobi's identity
\begin{equation}
\label{eq:J-Id}
\h_{00}(\tau)^4=\h_{01}(\tau)^4+\h_{10}(\tau)^4
\end{equation}
for any $\tau\in \H$. We have twice formulas of them.
\begin{fact}[{\cite[(15) in p.373 of Vol.II]{Er}}]  
\label{fact:2tau}
The theta constants satisfy twice formulas 
\begin{equation} 
\label{eq:2tau}
\begin{array}{l}
\h_{00}(2\tau)^2=\dfrac{\h_{00}(\tau)^2+\h_{01}(\tau)^2}{2},\\[2mm]
\h_{01}(2\tau)^2=\h_{00}(\tau)\h_{01}(\tau),\\[2mm]
\h_{10}(2\tau)^2=\dfrac{\h_{00}(\tau)^2-\h_{01}(\tau)^2}{2}.
\end{array}
\end{equation}
\end{fact}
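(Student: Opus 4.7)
The plan is to expand each side as a double sum over $\Z^2$ in the nome $q=e^{\pi\i\tau}$, and then match via explicit bijections of $\Z^2$ that halve the quadratic form $m^2+n^2$. Since the theta series converge absolutely and uniformly on compact subsets of $\H$, rearranging terms is legitimate throughout.

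First I would record the squared expansions
\[
\h_{00}(\tau)^2=\sum_{(m,n)\in\Z^2}q^{m^2+n^2},\qquad \h_{01}(\tau)^2=\sum_{(m,n)\in\Z^2}(-1)^{m+n}q^{m^2+n^2},
\]
together with
\[
\h_{00}(2\tau)^2=\sum_{(m,n)\in\Z^2}q^{2(m^2+n^2)},\qquad \h_{10}(2\tau)^2=\sum_{(m,n)\in\Z^2}q^{2(m+1/2)^2+2(n+1/2)^2}.
\]
Adding and subtracting shows that $\frac{1}{2}(\h_{00}(\tau)^2\pm\h_{01}(\tau)^2)$ is the restriction of $\sum q^{m^2+n^2}$ to the sublattice with $m+n$ even, respectively odd.

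The key step is then to parametrize each of these two sublattices by $\Z^2$ via an affine change of variables that halves $m^2+n^2$. The linear map $(k,l)\mapsto(k+l,k-l)$ is a bijection from $\Z^2$ onto the even-sum sublattice and sends $m^2+n^2$ to $2(k^2+l^2)$, yielding the first identity at once; the translate $(k,l)\mapsto(k+l+1,k-l)$ is a bijection from $\Z^2$ onto the odd-sum sublattice and sends $m^2+n^2$ to $2(k+\tfrac{1}{2})^2+2(l+\tfrac{1}{2})^2$, yielding the third identity.

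For the middle identity, I would write $\h_{00}(\tau)\h_{01}(\tau)=\sum_{(m,n)\in\Z^2}(-1)^n q^{m^2+n^2}$. The terms with $m+n$ odd cancel in pairs under the involution $(m,n)\leftrightarrow(n,m)$, which preserves $m^2+n^2$ but flips $(-1)^n$ whenever $m$ and $n$ have opposite parities. On the remaining even-sum part, applying $(k,l)\mapsto(k+l,k-l)$ converts $(-1)^n=(-1)^{k-l}$ into $(-1)^{k+l}$, so the sum becomes $\sum_{(k,l)\in\Z^2}(-1)^{k+l}q^{2(k^2+l^2)}=\h_{01}(2\tau)^2$. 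The only real obstacle is the clean bookkeeping of the bijections and sign characters; no analytic subtlety arises beyond the absolute convergence noted at the outset.
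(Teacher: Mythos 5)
Your proof is correct, and it is complete. Note that the paper itself offers no proof of this statement: it is recorded as a Fact with a citation to Erd\'elyi et al., so there is no argument in the text to compare yours against. What you give is the standard self-contained derivation. I checked the details: with $q=e^{\pi\i\tau}$ the squared expansions are as you state; $(k,l)\mapsto(k+l,k-l)$ does biject $\Z^2$ onto the even-sum sublattice with $m^2+n^2=2(k^2+l^2)$, and $(k,l)\mapsto(k+l+1,k-l)$ bijects $\Z^2$ onto the odd-sum sublattice with $m^2+n^2=2(k+\tfrac12)^2+2(l+\tfrac12)^2$, which gives the first and third formulas. For the second, the involution $(m,n)\leftrightarrow(n,m)$ is fixed-point free on the odd-sum part (a fixed point would force $m=n$, hence $m+n$ even) and flips the sign $(-1)^n$ there, so those terms cancel in pairs; on the even-sum part $(-1)^{k-l}=(-1)^{k+l}$, giving $\h_{01}(2\tau)^2$. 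Absolute convergence justifies all rearrangements, as you note. Nothing is missing.
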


\begin{fact}[{\cite[Theorem 7.1]{Mu}}]
\label{fact:trans-theta}
Let $g=(g_{ij})$ be an element in the projectivization $\mathrm{P}\Gamma_{12}$
of $\Gamma_{12}$. By multiplying $-1$ to $g$ if necessary, 
we may assume that $g$ satisfies either 
$g_{21}>0$, or 
$g_{21}=0$ and $g_{22}>0$. 
Then we have 
$$\h_{00}(g\cdot \tau)^2
=\chi(g)\cdot (g_{21}\tau+g_{22})\h_{00}(\tau)^2
$$
for any $\tau\in \D_{12}$, where 
$$\chi(g)=\left\{
\begin{array}{lcc} \i^{g_{22}-1} &\textrm{if} & g_{21}\in 2\Z,
g_{22}\notin 2\Z,\\
\i^{-g_{21}} &\textrm{if} & g_{21}\notin 2\Z,g_{22}\in 2\Z.
\end{array}
\right.
$$
\end{fact}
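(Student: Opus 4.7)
\emph{Proof proposal.}
The plan is to reduce the identity to checking it on a generating set of $\mathrm{P}\G_{12}$ and then to extend by a cocycle argument. First I would note that $\G_{12}$ is the classical theta group; modulo $\pm I_2$ it is generated by
$$T^2=\begin{pmatrix} 1 & 2\\ 0 & 1\end{pmatrix},\quad
T'^{\,2}=\begin{pmatrix} 1 & 0\\ 2 & 1\end{pmatrix},\quad
S=\begin{pmatrix} 0 & -1\\ 1 & 0\end{pmatrix},$$
where $\mathrm{P}\G(2)=\la T^2, T'^{\,2}\ra$ has index $2$ in $\mathrm{P}\G_{12}$ and $S$ interchanges its two cosets.

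Next I would verify the identity on each generator directly from \eqref{eq:theta}. For $T^2$, the substitution $\tau\mapsto\tau+2$ leaves each term of the series invariant since $\exp(\pi\i n^2\cdot 2)=1$ for $n\in\Z$, so $\h_{00}(\tau+2)^2=\h_{00}(\tau)^2$; this matches $g_{21}=0$, $g_{22}=1$, $\chi(T^2)=\i^{0}=1$. For $S$, Poisson summation gives the classical functional equation $\h_{00}(-1/\tau)^2=-\i\tau\,\h_{00}(\tau)^2$, which matches $\chi(S)=\i^{-1}=-\i$ together with $g_{21}\tau+g_{22}=\tau$. For $T'^{\,2}$, the easiest route is the conjugation $T'^{\,2}=S\cdot T^{-2}\cdot S^{-1}$ combined with the previous two cases (after absorbing any $-I_2$ into the normalization).

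To propagate from generators to the whole group, I would set $j(g,\tau)=\chi(g)(g_{21}\tau+g_{22})$ and check the cocycle identity $j(g_1g_2,\tau)=j(g_1,g_2\tau)\,j(g_2,\tau)$. Since the linear factor $g_{21}\tau+g_{22}$ already obeys the standard $\mathrm{SL}_2$-cocycle, this reduces to showing that $\chi(g_1g_2)=\chi(g_1)\chi(g_2)$ in each case of the parity split in the definition of $\chi$. Once this multiplicativity is in hand and the identity holds on generators, it propagates to every $g\in\G_{12}$.

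The main obstacle is the sign normalization. The statement fixes the representative of $[g]\in\mathrm{P}\G_{12}$ by $g_{21}>0$, or $g_{21}=0$ and $g_{22}>0$, but the product of two such normalized representatives need not satisfy this constraint, and replacing $g$ by $-g$ flips the parity roles in the piecewise definition and hence alters $\chi(g)$. Bookkeeping these signs carefully across all parity cases is the delicate core of the argument. Finally, since both $\h_{00}(g\tau)^2$ and $\chi(g)(g_{21}\tau+g_{22})\h_{00}(\tau)^2$ are holomorphic on all of $\H$, the identity established on the open set $\D_{12}$ extends to $\H$ by analytic continuation, so the restriction to $\D_{12}$ in the statement is merely a convenient convention for singling out a canonical representative.
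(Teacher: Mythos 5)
This statement is quoted in the paper as a Fact from Mumford \cite[Theorem 7.1]{Mu} and is given no proof there, so there is no internal argument to compare yours against; I can only assess your sketch on its own terms. Your strategy --- verify the multiplier on generators of the theta group and propagate via the cocycle property of $j(g,\tau)=\chi(g)(g_{21}\tau+g_{22})$ --- is the standard one and is viable: $\mathrm{P}\Gamma_{12}$ is generated by $T^2$ and $S$ (your $T'^{\,2}$ is redundant but harmless), your checks for $T^2$ and for $S$ (the latter being exactly Fact \ref{fact:J-act}) are correct, and the conjugation $T'^{\,2}=ST^{-2}S^{-1}$ does recover $\h_{00}(\tau/(2\tau+1))^2=(2\tau+1)\h_{00}(\tau)^2$ with $\chi=\i^{0}=1$.

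Two points need repair. First, your description of the normalization issue is off: replacing $g$ by $-g$ does \emph{not} flip the parity cases in the definition of $\chi$ (the parities of $g_{21},g_{22}$ are unchanged under negation); it multiplies $\chi(g)$ by $\i^{-2g_{22}}=(-1)^{g_{22}}$ or by $\i^{2g_{21}}=(-1)^{g_{21}}$, i.e.\ by $-1$ in either admissible case, and this exactly cancels the sign flip of $g_{21}\tau+g_{22}$. Hence $j(g,\tau)$ is well defined on $\mathrm{P}\Gamma_{12}$ independently of the representative, and the sign bookkeeping you flag as the delicate core is in fact automatic --- this is essentially the content of the paper's Lemma \ref{lem:factorG2}. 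Second, and more seriously, the one genuinely substantive step --- that $j(g_1g_2,\tau)=j(g_1,g_2\tau)\,j(g_2,\tau)$, equivalently that $\chi$ is multiplicative across the parity cases --- is asserted rather than verified. Since the linear factors already satisfy the $\mathrm{SL}_2$ cocycle identity, this reduces to checking $\chi(g_1g_2)=\chi(g_1)\chi(g_2)$ through the mod-$4$ behaviour of the matrix entries in each parity configuration; that finite computation is where all the content of the eighth-root-of-unity multiplier lives, and until it is carried out your argument is a plan rather than a proof. (You should also note that for $g\in\Gamma_{12}$ exactly one of $g_{21},g_{22}$ is even, since both even would force $\det g$ even; this is what makes the two cases of $\chi$ exhaustive and exclusive.) Your closing remark is correct: both sides are holomorphic on $\H$, so an identity established on any open set extends to all of $\H$, and the restriction to $\D_{12}$ in the statement is only a convention for pairing $\tau$ with a representative.
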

\begin{fact}[{\cite[Table V in p.36]{Mu}}]
\label{fact:J-act}
We have 
$$\h_{00}(\frac{-1}{\tau})^2=(-\i\tau)\h_{00}(\tau)^2,\quad 
\h_{01}(\frac{-1}{\tau})^2=(-\i\tau)\h_{10}(\tau)^2,\quad 
\h_{10}(\frac{-1}{\tau})^2=(-\i\tau)\h_{01}(\tau)^2.
$$
\end{fact}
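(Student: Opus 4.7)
The plan is to derive all three identities simultaneously from the classical modular transformation of the theta series via Poisson summation, and then conclude by squaring. Since both sides of each identity are holomorphic in $\tau\in\H$, I would first verify the identities along the purely imaginary ray $\tau=\i t$ with $t>0$, and extend to all of $\H$ at the end by the identity theorem.

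The central computation runs as follows. Writing $\h_{pq}(\i t)=\sum_{n\in\Z}f(n)$ with
\[
f(x)=\exp\bigl(-\pi(x+\tfrac{p}{2})^{2}t+\pi\i(x+\tfrac{p}{2})q\bigr),
\]
I would compute the Fourier transform $\hat f(k)=\int_\R f(x)e^{-2\pi\i kx}\,dx$ by the substitution $y=x+p/2$ (which produces a phase $e^{\pi\i kp}$) followed by the standard Gaussian integral $\int_\R e^{-\pi ty^{2}+2\pi\i\alpha y}\,dy=t^{-1/2}e^{-\pi\alpha^{2}/t}$ with $\alpha=(q-2k)/2$, obtaining
\[
\hat f(k)=t^{-1/2}\,e^{\pi\i kp}\,e^{-\pi(k-q/2)^{2}/t}.
\]
Applying Poisson summation $\sum_n f(n)=\sum_k \hat f(k)$ and recognizing the right-hand side (after the harmless change of index $k\mapsto -k$, using $e^{\pi\i kp}=e^{-\pi\i kp}$ for $k\in\Z$, $p\in\{0,1\}$) as a theta series with the characteristics interchanged, I arrive at the master identity
\[
\h_{qp}(-1/\tau)=\sqrt{-\i\tau}\;e^{\pi\i pq/2}\;\h_{pq}(\tau),
\]
where the square root is the principal branch, positive on the positive imaginary axis.

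Squaring both sides removes the ambiguity of the square root. For the three characteristics $(p,q)\in\{(0,0),(1,0),(0,1)\}$ relevant to the statement, the factor $e^{\pi\i pq/2}$ itself squares to $1$, so
\[
\h_{qp}(-1/\tau)^{2}=(-\i\tau)\,\h_{pq}(\tau)^{2}.
\]
Specializing $(p,q)$ to $(0,0)$, $(1,0)$, $(0,1)$ in turn reproduces the three displayed identities, and the extension from $\{\i t:t>0\}$ to all of $\H$ is immediate by holomorphy.

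The main technical obstacle is purely bookkeeping: the shift $x\mapsto x+p/2$ in the Fourier integral, the index reversal inside the Poisson sum, and the branch of $\sqrt{-\i\tau}$ each threaten to contribute spurious signs or phases. The pleasant feature of the present statement is that it is phrased in terms of squares, which simultaneously absorbs both the branch ambiguity and the phase $e^{\pi\i pq/2}$ for the three relevant characteristics, so the only genuine analytic input remaining is the justification of Poisson summation itself, which is immediate from the rapid Gaussian decay of $f$ and $\hat f$ for $t>0$.
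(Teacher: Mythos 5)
Your argument is correct and complete: the Fourier transform computation, the index reversal $k\mapsto -k$ together with $e^{\pi\i kp}=e^{-\pi\i kp}$, the resulting master identity $\h_{qp}(-1/\tau)=\sqrt{-\i\tau}\,e^{\pi\i pq/2}\,\h_{pq}(\tau)$ on the imaginary axis, and the extension to $\H$ by the identity theorem all check out, and squaring does dispose of both the branch and the phase since $pq=0$ for the three characteristics in question. The paper offers no proof of this statement --- it is quoted as a Fact from Table V of Mumford's \emph{Tata Lectures on Theta I} --- and your Poisson-summation derivation is precisely the classical argument by which that table is obtained, so this is essentially the same approach as the cited source.
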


\begin{fact}[The inverse of Schwarz's map]
\label{fact:Inv-S-map}
\begin{enumerate}
\item \label{item:lambda}
\cite[\S 5.6,5.7,5.8 in Chap. II, Proposition 8.1 in Chap. III]{Yo}
The inverse of Schwarz's map 
\begin{equation}
\label{eq:S-Map-1/2}
z\mapsto \tau=\frac{f_1(z)}{f_2(z)}=\i\cdot 
\frac{F(\frac{1}{2},\frac{1}{2},1,1-z)}{F(\frac{1}{2},\frac{1}{2},1,z)}
\end{equation} 
for $(a,b,c)=(\frac{1}{2},\frac{1}{2},1)$ is 
$$\H\ni \tau \mapsto \lambda(\tau)=\frac{\h_{10}(\tau)^4}
{\h_{00}(\tau)^4}\in Z.$$ 
The function $\lambda(\tau)$ is invariant under the action 
$\tau\mapsto (g_{11}\tau+g_{12})/(g_{21}\tau+g_{22})$ 
of $g=(g_{ij})\in \Gamma(2)$ on $\tau \in \H$.
\item \label{item:zeta}
\cite[Theorem 8.4]{MOT}
The inverse of Schwarz's map 
\begin{align}
\nonumber
z\mapsto \tau=&\frac{(-1+\i)f_1(z)+\i f_2(z)}{f_2(z)}
\\
\label{eq:S-Map-1/4}
=&\i\cdot \frac{\pi F(\frac{1}{4},\frac{1}{4},1;z)
+B(\frac{3}{4},\frac{3}{4})
\sqrt{1-z}F(\frac{3}{4},\frac{3}{4},\frac{3}{2},1-z)}
{\pi F(\frac{1}{4},\frac{1}{4},1,z)}
\\
\nonumber
=&\i\cdot \frac{-\pi F(\frac{1}{4},\frac{1}{4},1;z)
+B(\frac{1}{4},\frac{1}{4})
F(\frac{1}{4},\frac{1}{4},\frac{1}{2},1-z)}
{\pi F(\frac{1}{4},\frac{1}{4},1,z)},
\end{align}
for $(a,b,c)=(\frac{1}{4},\frac{1}{4},1)$ is 
$$
\H\ni \tau \mapsto \zeta(\tau)
=\frac{4\h_{01}^4(\tau)\h_{10}^4(\tau)}{\h_{00}(\tau)^8}\in Z\cup\{1\}
=\C-\{0\}.
$$
The function $\zeta(\tau)$ is invariant under the action 
$\tau\mapsto (g_{11}\tau+g_{12})/(g_{21}\tau+g_{22})$ 
of $g=(g_{ij})\in \Gamma_{1,2}$ on $\tau \in \H$.
\end{enumerate}
\end{fact}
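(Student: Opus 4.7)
My plan is to handle both parts by the same three-step scheme: identify the monodromy group $\Gamma$ from the circuit matrices computed in Examples \ref{ex:(1/2,1/2,1)} and \ref{ex:(1/4,1/4,1)}; verify that the candidate theta-quotient is a $\Gamma$-invariant meromorphic function on $\H$; and then match it with the inverse of Schwarz's map via a Hauptmodul comparison at the cusps.

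For part (\ref{item:lambda}), Example \ref{ex:(1/2,1/2,1)} identifies the projectivized monodromy of $\cF(\frac{1}{2},\frac{1}{2},1)$ with $\Gamma(2)/\{\pm I_2\}$. The $\Gamma(2)$-invariance of $\lambda(\tau)=\h_{10}(\tau)^4/\h_{00}(\tau)^4$ follows from Fact \ref{fact:trans-theta}: the fourth power kills both the automorphy factor $(g_{21}\tau+g_{22})$ and the character $\chi(g)$, since $\chi(g)^4=1$, while Fact \ref{fact:J-act} takes care of the generator $\tau\mapsto-1/\tau$ used to propagate invariance from $\D_{12}$ to all of $\H$. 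The standard $q$-expansions of $\h_{00},\h_{10}$ with $q=e^{\pi\i\tau}$ show that $\lambda$ takes each of $\{0,1,\infty\}$ exactly once at the three cusps of $\Gamma(2)$, so $\lambda$ is a Hauptmodul descending to a biholomorphism $\H/\Gamma(2)\xrightarrow{\sim}\P^1$ with $\lambda^{-1}(Z)=\H$. Comparison with Schwarz's map at $\tau\to\i\infty$ (where $\lambda\to 0$, while \eqref{eq:G-K} forces $f_1(z)/f_2(z)\to\i\infty$ as $z\to 0$) pins $\lambda$ down as the inverse of Schwarz's map.

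Part (\ref{item:zeta}) then reduces to part (\ref{item:lambda}) through the identity
\[
\zeta(\tau)=4\lambda(\tau)\bigl(1-\lambda(\tau)\bigr),
\]
which follows from Jacobi's identity \eqref{eq:J-Id} via $1-\lambda=\h_{01}^4/\h_{00}^4$. Since $\Gamma_{1,2}/\Gamma(2)\cong\Z/2$ acts on $\lambda$ by the involution $\lambda\mapsto 1-\lambda$, the product $4\lambda(1-\lambda)$ is $\Gamma_{1,2}$-invariant. The quadratic transformation \eqref{eq:FE-Gauss2} then yields $F(\frac{1}{4},\frac{1}{4},1;\zeta)=F(\frac{1}{2},\frac{1}{2},1;\lambda)$, the analytic counterpart of the basis change $P$ in Example \ref{ex:(1/4,1/4,1)} relating the $\Gamma(2,4)$-realization of the $(1/2,1/2,1)$-monodromy to the $\mathrm{GL}_2(\Z[\i])$-realization of the $(1/4,1/4,1)$-monodromy. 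A cusp and elliptic-point count for $\Gamma_{1,2}$ (two cusps mapped to $\{0,\infty\}$ and one order-$2$ elliptic point mapped to $1$, matching $1/|c-a-b|=2$ in Fact \ref{fact:Schwarz}) then makes $\zeta$ a Hauptmodul with image $\C\setminus\{0\}$, and the $\tau\to\i\infty$ normalization identifies it with the inverse of the stated Schwarz's map.

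The main obstacle is the bookkeeping behind the precise linear combination $(-1+\i)f_1(z)+\i f_2(z)$ appearing in \eqref{eq:S-Map-1/4}. The Hauptmodul argument by itself only determines $\zeta$ up to composition with an element of the normalizer of $\mathrm{P}\Gamma_{1,2}$ in $\mathrm{PSL}_2(\R)$; singling out this \emph{specific} ratio requires carefully tracking the boundary behavior of $f_1,f_2$ at $z=0$ and $z=1$, using the explicit hypergeometric series and Euler integral expressions given in Section~2 together with the branch conventions declared there.
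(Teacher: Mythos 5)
This item is stated in the paper as a \emph{Fact}, cited to [Yo] and [MOT]; the paper supplies no proof of its own, so there is nothing internal to compare your argument against. Judged on its own terms, your outline follows the classical Hauptmodul route and is correct in broad strokes: part (\ref{item:lambda}) is the standard identification of $\lambda$ with the inverse of the Schwarz map for $(\frac12,\frac12,1)$, and your reduction of part (\ref{item:zeta}) to part (\ref{item:lambda}) via $\zeta=4\lambda(1-\lambda)$ (from Jacobi's identity \eqref{eq:J-Id}), the involution $\lambda\mapsto 1-\lambda$ coming from Fact \ref{fact:J-act} on the nontrivial coset of $\Gamma(2)$ in $\Gamma_{12}$, and the quadratic transformation \eqref{eq:FE-Gauss2} is exactly the mechanism the paper itself exploits later in the proof of Theorem \ref{th:theta=HGS}.

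Two points need repair before this is a proof. First, the $\Gamma(2)$-invariance of $\lambda=\h_{10}^4/\h_{00}^4$ does not follow from Fact \ref{fact:trans-theta} alone: that fact transforms only $\h_{00}^2$, and your remark that ``the fourth power kills the character since $\chi(g)^4=1$'' is not the relevant computation --- in the ratio one needs the character of $\h_{10}^4$ to cancel against that of $\h_{00}^4$ on $\Gamma(2)$, which requires the transformation law for $\h_{10}$ (e.g.\ from Mumford's Table V or the full Theorem 7.1 of [Mu]), nowhere stated in this paper. Second, and more seriously, the content of part (\ref{item:zeta}) beyond ``some Schwarz map for $(\frac14,\frac14,1)$ has inverse $\zeta$'' is the identification of the \emph{specific} combination $\bigl((-1+\i)f_1(z)+\i f_2(z)\bigr)/f_2(z)$ in \eqref{eq:S-Map-1/4}; you correctly flag that the Hauptmodul argument only pins the map down up to the normalizer of $\mathrm{P}\Gamma_{12}$ in $\mathrm{PSL}_2(\R)$, but you then defer precisely this step. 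Since that normalization is what the Fact asserts (and what the paper needs, e.g.\ for the alternative proof of Theorem \ref{th:lim-HGS-rep}), the plan is incomplete until you carry out the boundary analysis at $z=0$ and $z=1$ using the connection formulae for $f_1,f_2$ from Section 2 --- or simply cite [MOT, Theorem 8.4] as the paper does.
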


Here we give a fundamental region of $\Gamma(2)$ and that of $\Gamma_{12}$ as
\begin{align}
\label{eq:fund-reg-G(2)}
\D(2)&
=\{\tau\in \H\mid -1<\re(\tau)\le1,\ |\tau-\dfrac{1}{2}|\ge \dfrac{1}{2},
\ |\tau+\dfrac{1}{2}|> \dfrac{1}{2}
\},
\\
\label{eq:fund-reg-G01(2)}
\D_{12}&
=\{\tau\in \H\mid -1< \re(\tau)\le 1,\ |\tau|> 1\}\cup 
\{\tau \in \H \mid |\tau|=1, 0\le \re(\tau)\le 1\}.
\end{align}
\begin{figure}[htb]

\includegraphics[width=10cm]{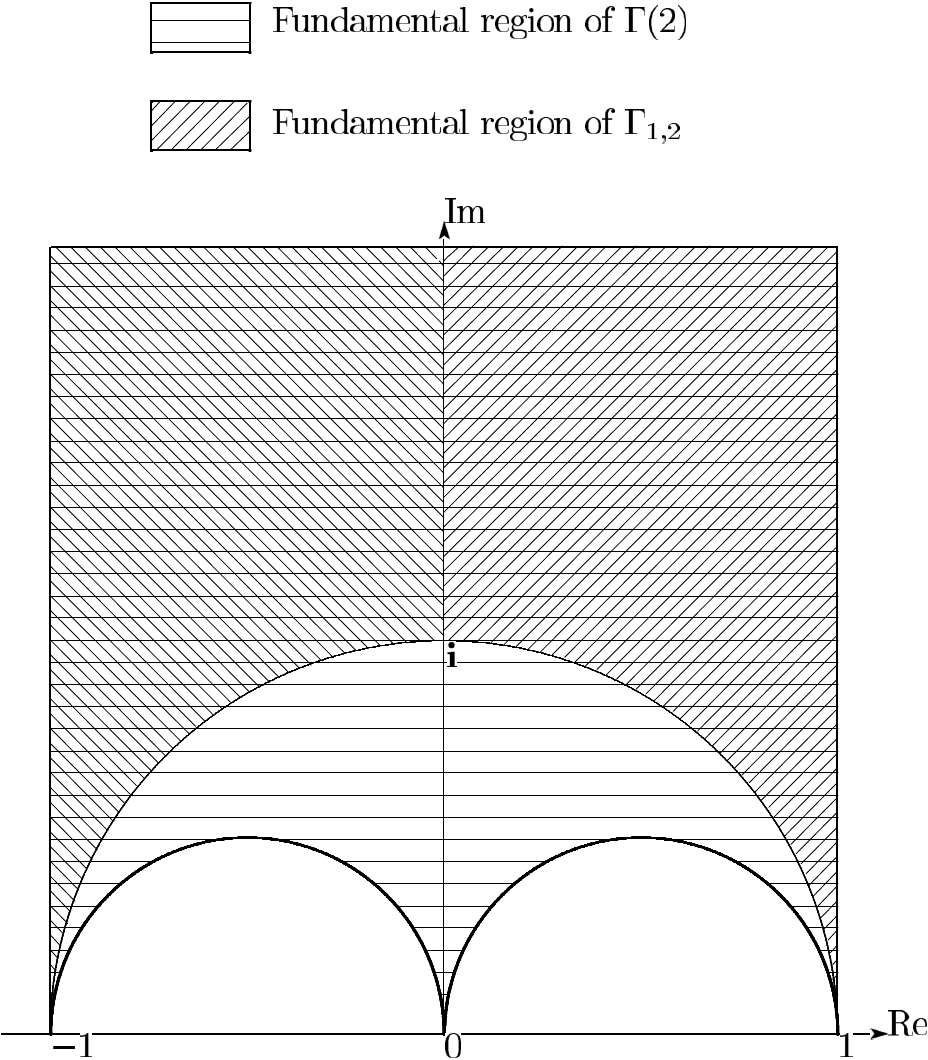}
\caption{Fundamental regions for $\Gamma(2)$ and $\Gamma_{12}$} 
\end{figure}

\section{Jacobi's formula and its analogy}
As given in \cite[Theorem 2.1]{BB}, 
the hypergeometric series $F(\frac{1}{2},\frac{1}{2},1;x)$ and 
the theta constants are related by Jacobi's formula:
\begin{fact}[Jacobi's formula]
\label{fact:Jacobi}
Set 
$$\lambda(\tau)=\frac{\h_{10}(\tau)^4}{\h_{00}(\tau)^4}$$
for any element $\tau$ in 
$\D(2)$.  
Then Jacobi's formula
\begin{equation}
\label{eq:Jacobi}
F(\frac{1}{2},\frac{1}{2},1;\lambda(\tau))=\h_{00}(\tau)^2
\end{equation}
holds, where $F(\frac{1}{2},\frac{1}{2},1;z)$ denotes  
the single-valued function on $\C-\{1\}$ given 
in Remark \ref{rem:discontinuous}. 
By the restriction of Schwarz's map 
$$
 z\mapsto \tau=\tau(z)=\i\frac{F(\frac{1}{2},\frac{1}{2},1;1-z)}
 {F(\frac{1}{2},\frac{1}{2},1;z)}
 $$
in \eqref{eq:S-Map-1/2} to the domain $\{z\in \C\mid |z|<1,\ |z-1|<1\}$, 
the equality \eqref {eq:Jacobi} is pulled back to  
\begin{equation}
\label{eq:Jacobi-z}
F(\frac{1}{2},\frac{1}{2},1;z)=
\theta_{00}(\tau(z))^2.
\end{equation}
\end{fact}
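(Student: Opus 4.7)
The plan is to establish \eqref{eq:Jacobi} by identifying both sides, as functions of $\tau$, with the same distinguished solution of the linear ODE obtained by pulling $\cF(\frac{1}{2},\frac{1}{2},1)$ back along $\lambda$. By Fact \ref{fact:Inv-S-map}\itref{item:lambda}, $z=\lambda(\tau)$ is the inverse of Schwarz's map for $(\frac{1}{2},\frac{1}{2},1)$, so both $F(\frac{1}{2},\frac{1}{2},1;\lambda(\tau))$ and $\h_{00}(\tau)^2$ are natural candidates for the unique local solution at the cusp $\tau=i\infty$ that is holomorphic in $q=e^{\pi\i\tau}$ and takes value $1$ at $q=0$.

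First I would pull the operator $z(1-z)\frac{d^2}{dz^2}+(1-2z)\frac{d}{dz}-\frac{1}{4}$ back along $z=\lambda(\tau)$, obtaining a second-order linear operator in $\tau$ whose coefficients are expressible in $\h_{00},\h_{01},\h_{10}$. This uses the classical formulas for $\frac{d}{d\tau}\log \h_{pq}(\tau)$ (consequences of the heat equation for Jacobi's theta function), through which $\frac{d\lambda}{d\tau}$ and $\frac{d^2\lambda}{d\tau^2}$ are expressed in terms of theta constants.

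Next I would verify by direct substitution that $\h_{00}(\tau)^2$ lies in the kernel of the pulled-back operator. Once the pull-back is written out, this reduces, with the aid of Jacobi's identity \eqref{eq:J-Id}, to a polynomial identity in the three theta constants. With this in hand, the proof concludes by matching initial conditions at $\tau=i\infty$: there $\lambda(\tau)\to 0$, so $F(\frac{1}{2},\frac{1}{2},1;\lambda(\tau))\to 1$, while $\h_{00}(i\infty)^2=1$; since $F(\frac{1}{2},\frac{1}{2},1;z)$ is, up to scalar, the unique local solution of $\cF(\frac{1}{2},\frac{1}{2},1)$ holomorphic and nonzero at $z=0$, the pulled-back ODE admits a unique local solution at $\tau=i\infty$ with value $1$, so the two functions coincide on a neighborhood of $i\infty$ and, by analytic continuation on the connected set $\D(2)$, on all of $\D(2)$.

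For the second equality \eqref{eq:Jacobi-z}, I would restrict $\tau=\tau(z)$ from \eqref{eq:S-Map-1/2} to the disk $\{z\in\C\mid |z|<1,\ |z-1|<1\}$, on which $\tau(z)$ lies in $\D(2)$, and substitute into \eqref{eq:Jacobi}, using $\lambda(\tau(z))=z$. The main obstacle I expect is the verification step: carrying out the pull-back of the hypergeometric operator via $\lambda$ and checking that $\h_{00}(\tau)^2$ is annihilated by it. The specific parameters $(\frac{1}{2},\frac{1}{2},1)$ enter essentially here, via the matching of the local exponents of $\cF(\frac{1}{2},\frac{1}{2},1)$ at $0,1,\infty$ with the orders of vanishing of $\lambda,1-\lambda,1/\lambda$ at the corresponding cusps of $\D(2)$.
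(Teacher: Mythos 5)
The statement you are proving is quoted in the paper as a \emph{Fact} with a citation to \cite[Theorem 2.1]{BB}; the paper supplies no proof of its own, so there is nothing internal to compare against. Measured against the cited source, your route is genuinely different: Borwein--Borwein derive \eqref{eq:Jacobi} from the AGM, using the duplication formulas \eqref{eq:2tau} to show that $(\h_{00}(\tau)^2,\h_{01}(\tau)^2)\mapsto(\h_{00}(2\tau)^2,\h_{01}(2\tau)^2)$ is an arithmetic--geometric mean step and then invoking the invariance principle together with a quadratic transformation of $F(\frac12,\frac12,1;z)$, whereas you propose the ``pull back the ODE and match exponents at the cusp'' argument. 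Your strategy is classical and sound: the exponents of $\cF(\frac12,\frac12,1)$ at $0$ are $0,0$ with a logarithmic second solution, so a bounded solution of the pulled-back equation tending to $1$ as $\im\tau\to\infty$ is unique, and $\lambda(\tau)\to 0$, $\h_{00}(\tau)^2\to 1$ there; the identity then propagates over the connected set $\D(2)$ (with the boundary conventions of Remark \ref{rem:boundary} needed on $|\tau-\frac12|=\frac12$, where $\lambda>1$).

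The one substantive reservation is that the entire mathematical content of the theorem sits in the step you defer, namely that $\h_{00}(\tau)^2$ is annihilated by the pulled-back operator; as written this is a plan, not a proof. You can shorten that verification considerably: since $\tau=f_1/f_2$, the Wronskian relation gives $f_2(z)^2=W(z)\,\dfrac{dz}{d\tau}$ with $W(z)=\dfrac{C}{z(1-z)}$ for the parameters $(\frac12,\frac12,1)$, so with $f_2=\pi F(\frac12,\frac12,1;z)$ the whole claim reduces to the single first-order theta identity $\dfrac{d\lambda}{d\tau}=\pi\i\,\lambda(1-\lambda)\,\h_{00}(\tau)^4$ (a consequence of the heat equation), followed by fixing the constant and the sign of the square root at the cusp. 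This replaces the second-order substitution you anticipate as the main obstacle by one differential identity among theta constants, and is the cleanest way to complete your outline.
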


\begin{remark}
\label{rem:boundary}
Suppose that $\tau$ is in the interior $\D(2)^\circ$ of $\D(2)$. 
\begin{enumerate}
\item If $\re(\tau)\ge 0$ then $\im(\lambda(\tau))\ge 0$, 
otherwise $\im(\lambda(\tau))<0$. 

\item 
If $\tau$ approaches to a point in the boundary components 
$\{\tau\in \H\mid \re(\tau)=\pm 1\}$ of $\D(2)$, 
then $\lambda(\tau)$ converges to a negative real value, at which 
the hypergeometric series is naturally extended.

\item 
If $\tau$ 
approaches to a point in the boundary component 
$\{\tau\in \H\mid |\tau-\frac{1}{2}|=\frac{1}{2}\}$ of $\D(2)$,  
then $\lambda(\tau)$ approaches to a real value greater than $1$ 
via the upper half plane. 
Note that the addition of this component to $\D(2)$ 
is compatible with the definition of $F(a,b,c;z)$ on $\C-\{1\}$ 
in Remark \ref{rem:discontinuous}. 

\end{enumerate}

\end{remark}

To extend \eqref{eq:Jacobi}
to the formula on the whole space $\H$, 
we need to consider the continuation of 
$F(\frac{1}{2},\frac{1}{2},1;\lambda(\tau))$.  
We prepare a lemma.
\begin{lemma}
\label{lem:factorG2}
For
$g=(g_{ij})\in \Gamma(2,4)\la \i J\ra$,  
let $g'=(g'_{ij})\in \mathrm{P}\Gamma_{12}$ be its normalization of $g$ in  
the formula of Fact \ref{fact:trans-theta}. 
Then we have 
$$g_{21}\tau+g_{22}=\chi(g')\cdot (g'_{21}\tau+g'_{22}).$$
\end{lemma}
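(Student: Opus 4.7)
The plan is to decompose $\Gamma(2,4)\la \i J\ra = \Gamma(2,4)\cup (\i J)\cdot \Gamma(2,4)$ and verify the identity on each coset separately. The identification $g\mapsto g'$ of projectivizations is forced by the M\"obius actions on $\H$: every element of $\Gamma(2,4)$ already sits inside $\Gamma_{12}$ (the entries $g_{12}$ and $g_{21}$ are even by hypothesis, which makes $g_{11}g_{12}$ and $g_{21}g_{22}$ even automatically), while $\i J$ induces $\tau\mapsto -1/\tau$, the same fractional linear transformation as $S=\begin{pmatrix}0 & -1 \\ 1 & 0\end{pmatrix}\in \Gamma_{12}$. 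Hence $g'$ equals $\pm g$ in the first coset and $\pm Sh$ in the second, with the sign fixed by the normalization rule that $g'_{21}>0$, or $g'_{21}=0$ and $g'_{22}>0$.

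For $g\in \Gamma(2,4)$ the defining congruences give $g_{21}\equiv 0\bmod 2$ and $g_{22}\equiv 1\bmod 4$, so the first case of $\chi$ in Fact \ref{fact:trans-theta} applies. If normalization leaves $g$ unchanged then $\chi(g')=\i^{g_{22}-1}=1$ directly from $g_{22}\equiv 1\bmod 4$; if instead it replaces $g$ by $-g$ then $\chi(g')=\i^{-g_{22}-1}=-1$. In either sub-case $\chi(g')(g'_{21}\tau+g'_{22}) = g_{21}\tau+g_{22}$ follows immediately.

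For $g=(\i J)h$ with $h\in \Gamma(2,4)$, direct multiplication gives $(g_{21},g_{22})=(-\i h_{11},-\i h_{12})$, whence $g_{21}\tau+g_{22}=-\i(h_{11}\tau+h_{12})$. On the other side, the bottom row of $Sh$ is $(h_{11},h_{12})$, so $g'=\varepsilon\cdot Sh$ with $\varepsilon\in\{\pm 1\}$ chosen so that $\varepsilon h_{11}>0$ (note $h_{11}\ne 0$, since $\det h=1$ while $h_{21}h_{12}$ is even). As $\varepsilon h_{11}$ is odd and $\varepsilon h_{12}$ is even, the second case of $\chi$ gives $\chi(g')=\i^{-\varepsilon h_{11}}$, which under $h_{11}\equiv 1\bmod 4$ evaluates to $-\i$ when $\varepsilon=1$ and to $\i$ when $\varepsilon=-1$. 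In either sub-case $\chi(g')(g'_{21}\tau+g'_{22})=-\i(h_{11}\tau+h_{12})=g_{21}\tau+g_{22}$.

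The whole argument is a direct case check; the one point demanding care is the sign bookkeeping together with the congruence computation $\i^{\pm h_{11}}=\pm\i$ for $h_{11}\equiv 1\bmod 4$, which precisely compensates the factor $\i$ introduced by $\i J$ in the second coset. No conceptual obstruction arises.
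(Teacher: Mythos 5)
Your proof is correct and takes essentially the same route as the paper's: decompose $\Gamma(2,4)\la \i J\ra$ into the two cosets and run the sign/congruence case check, using $g_{22}\equiv 1\bmod 4$ on $\Gamma(2,4)$ and the fact that the bottom row of $(\i J)h$ is $-\i(h_{11},h_{12})$ with $h_{11}\equiv 1\bmod 4$ on the other coset. The only cosmetic difference is that you write the normalization in the second coset as $g'=\pm Sh$ with $S=\left(\begin{smallmatrix}0&-1\\ 1&0\end{smallmatrix}\right)$, whereas the paper writes $g'=\pm\i\cdot g$ directly; these agree since $\i\cdot(\i J)=S$.
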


\begin{proof} 
At first, we consider the case $g=(g_{ij})\in \Gamma(2,4)$. 
In this case, we have $g_{22}\equiv 1\bmod 4$. 
If $g_{21}>0$ then $g'=g$ and 
$$\chi(g')\cdot (g'_{21}\tau+g'_{22})=\i^{g_{22}-1} \cdot (g_{21}\tau+g_{22})
=g_{21}\tau+g_{22}.$$
If $g_{21}<0$ then $g'=-g$ and 
$$\chi(g')\cdot (g'_{21}\tau+g'_{22})
=\i^{-g_{22}-1}\cdot (-g_{21}\tau-g_{22}) =g_{21}\tau+g_{22}.$$
If $g_{21}=0$ then $g'_{22}=|g_{22}|$ and 
$$\chi(g')\cdot (g'_{21}\tau+g'_{22})=\i^{|g_{22}|-1} \cdot |g_{22}|
=\left\{
\begin{array}{ccc}
\i^{g_{22}-1} \cdot g_{22}=g_{22} &\textrm{if}& g_{22}>0,\\
\i^{-g_{22}-1}\cdot(-g_{22}) =g_{22} &\textrm{if}& g_{22}<0.
\end{array}
\right.
$$

Next we consider the case $g=(g_{ij})\in (\i J)\cdot \Gamma(2,4)$.
In this case, $g_{21}$ takes the form of $(-\i)\cdot(4m+1)$ ($m\in \Z$), 
and $g'$ is given by a scalar multiplication of 
$\pm \i$ to $g$ with $g_{21}'=|4m+1|$.
Since they do not vanish, it is sufficient to show $\chi(g')\cdot g'_{21}=g_{12}$.   
In fact, we have
$$\chi(g')\cdot g'_{21}=|4m+1|\cdot \i^{-|4m+1|}=
\left\{ \begin{array}{rll}
(4m+1)\cdot \i^{-(4m+1)}=(-\i)\cdot (4m+1) &\textrm{if}& 4m+1>0, \\
-(4m+1)\cdot \i^{4m+1}=(-\i)\cdot (4m+1) &\textrm{if}& 4m+1<0.
        \end{array}
\right.
$$



Thus $g_{21}\tau+g_{22}=\chi(g')\cdot 
(g'_{21}\tau+g'_{22})$ holds in any cases. 
\end{proof}

\begin{cor}
\label{cor:Jacobi-formula}
We extend Jacobi's formula \eqref{eq:Jacobi} to 
the equality on the whole space $\H$ as 
\begin{align}
\label{eq:ex-Jacobi1}
\h_{00}(\tau)^2=&F_\rho(\frac{1}{2},\frac{1}{2},1;\lambda(\tau))\\
\label{eq:ex-Jacobi2}
=&(g_{21}\tau_0+g_{22})F(\frac{1}{2},\frac{1}{2},1;\lambda(\tau_0))
=\frac{F(\frac{1}{2},\frac{1}{2},1;\lambda(\tau))}
{-g_{21}\tau+g_{11}},
\end{align}
where $\tau$ is any element in $\H$, $\tau_0\in \D(2)$ and 
$g=(g_{ij})\in \Gamma(2,4)$ satisfy
$$\tau=g\cdot \tau_0=\frac{g_{11}\tau_0+g_{12}}{g_{21}\tau_0+g_{22}},$$
$\rho$ is the image of a path from 
$\tau_0$ to $\tau$ in $\H$ under the map 
$\H\ni \tau \mapsto \lambda(\tau)\in \C-\{0,1\}$, 
$F(\frac{1}{2},\frac{1}{2},1;z)$ denotes 
the single-valued function on $\C-\{1\}$ given 
in Remark \ref{rem:discontinuous}, 
and $F_\rho(\frac{1}{2},\frac{1}{2},1;w)$ is its analytic continuation 
along $\rho$. 

\end{cor}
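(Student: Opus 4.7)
The plan is to first establish the automorphy-factor version \eqref{eq:ex-Jacobi2}, and then deduce the monodromy version \eqref{eq:ex-Jacobi1} by an analytic continuation argument on the simply connected domain $\H$.

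First I would verify that every $\tau\in\H$ can be written as $\tau=g\cdot\tau_0$ for some $\tau_0\in\D(2)$ and $g=(g_{ij})\in\Gamma(2,4)$. Indeed, $\D(2)$ is a fundamental region for $\Gamma(2)$ acting on $\H$; since $-I_2\notin\Gamma(2,4)$, $\Gamma(2)=\Gamma(2,4)\cup(-I_2)\Gamma(2,4)$, and $-I_2$ acts trivially, the same $\D(2)$ serves as a fundamental region for $\Gamma(2,4)$. At boundary points the choice of $g$ is not unique, but either choice yields the same identity.

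Next I would prove \eqref{eq:ex-Jacobi2}. Since $\Gamma(2,4)\subset\Gamma(2,4)\la\i J\ra$, Lemma \ref{lem:factorG2} is applicable. With $g'\in\mathrm{P}\Gamma_{12}$ the normalization of $g$, Fact \ref{fact:trans-theta} gives
\[
\h_{00}(\tau)^2=\h_{00}(g\cdot\tau_0)^2=\chi(g')(g'_{21}\tau_0+g'_{22})\,\h_{00}(\tau_0)^2=(g_{21}\tau_0+g_{22})\,\h_{00}(\tau_0)^2,
\]
and Fact \ref{fact:Jacobi} substitutes $\h_{00}(\tau_0)^2=F(\frac{1}{2},\frac{1}{2},1;\lambda(\tau_0))$, delivering the first equality of \eqref{eq:ex-Jacobi2}. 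For the second equality I would invoke the $\Gamma(2)$-invariance of $\lambda$ from Fact \ref{fact:Inv-S-map}\itref{item:lambda} (so $\lambda(\tau_0)=\lambda(\tau)$) together with the cocycle identity
\[
g_{21}\tau_0+g_{22}=\frac{1}{-g_{21}\tau+g_{11}},
\]
which follows at once from $\tau=(g_{11}\tau_0+g_{12})/(g_{21}\tau_0+g_{22})$ and $\det g=1$.

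For \eqref{eq:ex-Jacobi1} I would proceed by analytic continuation. Fact \ref{fact:Jacobi} states that on the interior of $\D(2)$ the identity $F(\frac{1}{2},\frac{1}{2},1;\lambda(\tau))=\h_{00}(\tau)^2$ holds as holomorphic functions. Since $\h_{00}(\tau)^2$ is holomorphic throughout the simply connected domain $\H$ and the germ of $F(\frac{1}{2},\frac{1}{2},1;\lambda(\tau))$ at $\tau_0\in\D(2)^\circ$ agrees with $\h_{00}(\tau)^2$, the monodromy theorem on $\H$ forces the analytic continuation of $F(\frac{1}{2},\frac{1}{2},1;\lambda(\tau))$ along any path from $\tau_0$ to $\tau$ in $\H$ to coincide with $\h_{00}(\tau)^2$. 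The image of such a path under $\lambda:\H\to Z=\C-\{0,1\}$ is the path $\rho$ appearing in the statement, and the continuation along it is, by definition, $F_\rho(\frac{1}{2},\frac{1}{2},1;\lambda(\tau))$.

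The main obstacle I anticipate is the bookkeeping between the multi-valued object $F_\rho$ in \eqref{eq:ex-Jacobi1} and the single-valued extension of $F$ fixed in Remark \ref{rem:discontinuous} appearing in \eqref{eq:ex-Jacobi2}: a priori the scalar $(-g_{21}\tau+g_{11})^{-1}=g_{21}\tau_0+g_{22}$ must absorb exactly the monodromy picked up along $\rho$. This reconciliation is delicate to state directly, but is forced automatically because both expressions have been identified with the single-valued holomorphic function $\h_{00}(\tau)^2$ on $\H$, so the whole chain of equalities in \eqref{eq:ex-Jacobi1}--\eqref{eq:ex-Jacobi2} follows.
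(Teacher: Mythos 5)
Your proposal is correct and follows essentially the same route as the paper: equation \eqref{eq:ex-Jacobi1} by analytic continuation of \eqref{eq:Jacobi} over the simply connected $\H$, and \eqref{eq:ex-Jacobi2} via Fact \ref{fact:trans-theta} combined with Lemma \ref{lem:factorG2}, the $\Gamma(2)$-invariance of $\lambda$, and the cocycle identity $g_{21}\tau_0+g_{22}=(-g_{21}\tau+g_{11})^{-1}$. The only differences are cosmetic: you spell out why $\D(2)$ also serves as a fundamental region for $\Gamma(2,4)$ and comment on reconciling $F_\rho$ with the single-valued extension, points the paper leaves implicit.
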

\begin{proof}
We obtain the identity \eqref{eq:ex-Jacobi1}  
by the analytic continuation of \eqref{eq:Jacobi} to $\H$. 
We also have 
\begin{align*}
\h_{00}(\tau)^2=&\h_{00}(g\cdot \tau_0)^2
=(g_{21}\tau_0+g_{22})\cdot \h_{00}(\tau_0)^2
=(g_{21}\tau_0+g_{22})\cdot F(\frac{1}{2},\frac{1}{2},1;\lambda(\tau_0))\\
=&(g_{21}\tau_0+g_{22})\cdot F(\frac{1}{2},\frac{1}{2},1;\lambda(\tau))
=\frac{F(\frac{1}{2},\frac{1}{2},1;\lambda(\tau))}{-g_{21}\tau+g_{11}} 
\end{align*}
by \eqref{eq:Jacobi} and Lemma \ref {lem:factorG2}.
Here note that 
$$
\tau_0=\frac{g_{22}\tau-g_{12}}{-g_{21}\tau+g_{11}},\quad 
g_{21}\tau_0+g_{22}=g_{21}\frac{g_{22}\tau-g_{12}}{-g_{21}\tau+g_{11}}
+g_{22}=\frac{1}{-g_{21}\tau+g_{11}},
$$
and that $\lambda(\tau)=\lambda(\tau_0)$. 
\end{proof}

We give an analogy of Jacobi's formula \eqref{eq:Jacobi}. 
\begin{theorem}
\label{th:theta=HGS}
We set 
$$\zeta(\tau)=\frac{4\h_{01}(\tau)^4\h_{10}(\tau)^4}{\h_{00}(\tau)^8}$$
for any $\tau$ in 
$\D_{12}$.
Then we have 
\begin{align}
\label{eq:1/4}
F\Big(\frac{1}{4},\frac{1}{4},1;\zeta(\tau)\Big)
=&\h_{00}(\tau)^2,\\
\label{eq:3/4}
F\Big(\frac{3}{4},\frac{3}{4},1;\zeta(\tau)\Big)
=&\frac{\h_{00}(\tau)^6}{\h_{01}(\tau)^4-\h_{10}(\tau)^4},
\end{align}
where the hypergeometric series is extended to 
the single-valued function on $\C-\{1\}$ given 
in Remark \ref{rem:discontinuous}. 
\end{theorem}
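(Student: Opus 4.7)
The plan is to deduce both identities from classical Jacobi's formula \eqref{eq:Jacobi} via the quadratic transformations \eqref{eq:FE-Gauss2} and \eqref{eq:FE-Gauss3}, with Jacobi's identity \eqref{eq:J-Id} supplying the algebraic link between $\zeta(\tau)$ and $\lambda(\tau)$.

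First I would rewrite $\zeta(\tau)$ in terms of $\lambda(\tau)=\h_{10}(\tau)^4/\h_{00}(\tau)^4$. By \eqref{eq:J-Id} one has $\h_{01}^4/\h_{00}^4=1-\lambda$, hence
\[
\zeta(\tau)=\frac{4\h_{01}^4\h_{10}^4}{\h_{00}^8}=4\lambda(\tau)\bigl(1-\lambda(\tau)\bigr).
\]
Next, specialize \eqref{eq:FE-Gauss2} at $(a,b)=(1/2,1/2)$, for which $(a+b+1)/2=1$ and $(a/2,b/2)=(1/4,1/4)$, obtaining
\[
F\bigl(\tfrac12,\tfrac12,1;z\bigr)=F\bigl(\tfrac14,\tfrac14,1;4z(1-z)\bigr)
\]
valid for $z$ near $0$. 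Substituting $z=\lambda(\tau)$ and invoking Jacobi's formula \eqref{eq:Jacobi} gives \eqref{eq:1/4}, at least when $\lambda(\tau)$ is small, i.e., for $\tau\in\D_{12}$ with $\im(\tau)$ sufficiently large.

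For \eqref{eq:3/4} I would apply the same substitution $(a,b)=(1/2,1/2)$ in \eqref{eq:FE-Gauss3}, which yields
\[
F\bigl(\tfrac12,\tfrac12,1;z\bigr)=(1-2z)\,F\bigl(\tfrac34,\tfrac34,1;4z(1-z)\bigr).
\]
With $z=\lambda(\tau)$, Jacobi's formula then gives $F(3/4,3/4,1;\zeta(\tau))=\h_{00}(\tau)^2/(1-2\lambda(\tau))$. Using \eqref{eq:J-Id} once more,
\[
1-2\lambda(\tau)=\frac{\h_{00}^4-2\h_{10}^4}{\h_{00}^4}=\frac{\h_{01}^4-\h_{10}^4}{\h_{00}^4},
\]
so the right-hand side rearranges to $\h_{00}(\tau)^6/(\h_{01}(\tau)^4-\h_{10}(\tau)^4)$, proving \eqref{eq:3/4} on the same small region near the cusp.

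It remains to propagate these identities over the full fundamental region $\D_{12}$. Both sides of \eqref{eq:1/4} and \eqref{eq:3/4} are holomorphic functions of $\tau$ on the open part of $\D_{12}$ where $\zeta(\tau)\notin[1,\infty)$ (the right-hand sides obviously, the left-hand sides because the extension of $F$ in Remark \ref{rem:discontinuous} is holomorphic off $[1,\infty)$, and by Fact \ref{fact:Inv-S-map}\itref{item:zeta} the preimage $\zeta^{-1}([1,\infty))$ lies on the boundary of $\D_{12}$). Since the identities hold on a nonempty open neighborhood of the cusp $\i\infty$ and $\D_{12}^\circ$ is connected, the identity theorem extends them to all of $\D_{12}$; the boundary points where $\zeta(\tau)=1$ are handled by the single-valued extension of $F$ specified in Remark \ref{rem:discontinuous}. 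The main obstacle I expect is the bookkeeping in this last step: one must confirm that the branch of $F(1/4,1/4,1;z)$ used in Remark \ref{rem:discontinuous} is the one picked out by analytic continuation from the neighborhood of $z=0$, and similarly for the $(3/4,3/4,1)$ series, so that the equalities are not merely true up to monodromy.
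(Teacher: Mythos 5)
Your proposal is correct and follows essentially the same route as the paper: both identities are obtained by specializing the quadratic transformations \eqref{eq:FE-Gauss2} and \eqref{eq:FE-Gauss3} at $a=b=\tfrac12$, combining with Jacobi's formula \eqref{eq:Jacobi} and Jacobi's identity \eqref{eq:J-Id} near the cusp, and then extending to all of $\D_{12}$ using that $\zeta(\tau)$ avoids $[1,\infty)$ on the interior so that the single-valued branch of Remark \ref{rem:discontinuous} applies. Your extra care about which branch the analytic continuation selects is a sensible refinement of the same argument, not a different one.
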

\begin{proof}
By substituting $a=b=1/2$ into \eqref{eq:FE-Gauss2},
we have 
$$F(\frac{1}{2},\frac{1}{2},1;\lambda)=F(\frac{1}{4},\frac{1}{4},1;
4\lambda(1-\lambda)),$$
where $\lambda$ is sufficiently near to $0$. 
Jacobi's formula \eqref{eq:Jacobi} together with 
Jacobi's identity \eqref{eq:J-Id}
yields that 
\begin{align*}
\h_{00}(\tau)^2&=
F\Big(\frac{1}{2},\frac{1}{2},1;\frac{\h_{10}(\tau)^4}{\h_{00}(\tau)^4}\Big)
=F\Big(\frac{1}{4},\frac{1}{4},1;4\frac{\h_{10}(\tau)^4}{\h_{00}(\tau)^4}
\big(1-\frac{\h_{10}(\tau)^4}{\h_{00}(\tau)^4})\Big)\\
&=F\Big(\frac{1}{4},\frac{1}{4},1;\frac{4\h_{01}(\tau)^4\h_{10}(\tau)^4}
{\h_{00}(\tau)^8}\Big)
\end{align*}
for $\tau$ with sufficiently large imaginary part. 
We can extend this identity to any $\tau\in \D_{12}$  since 
if $\tau$ belongs to 
the interior of 
$\D_{12}$ then $\zeta(\tau)$ is in 
$\C-[1,\infty)$ 
on which $F(1/4,1/4,1;z)$ is single valued. 
 
To show the second formula, use \eqref{eq:FE-Gauss3} with 
the substitution $a=b=1/2$. Then we have 
$$\h_{00}(\tau)^2=\big(1-2\frac{\h_{10}(\tau)^4}{\h_{00}(\tau)^4}\big)
F\Big(\frac{3}{4},\frac{3}{4},1;\zeta(\tau)\Big),
$$
which is equivalent to \eqref{eq:3/4}.
\end{proof}

\begin{remark}
\begin{enumerate}
\item Substitute $\tau=\i$ into \eqref{eq:1/4}.
Fact \ref{fact:J-act} implies $\h_{01}(\i)^2=\h_{10}(\i)^2$,  
and Jacobi's identity \eqref{eq:J-Id} becomes  
$\h_{00}(\i)^4=\h_{01}(\i)^4+\h_{10}(\i)^4=2\h_{10}(\i)^4$.
By Gauss-Kummer's identity \eqref{eq:G-K}, we have
$$
\h_{00}(\i)^2=F(\frac{1}{4},\frac{1}{4},1;\frac{4\h_{01}(\i)^4\h_{10}(\i)^4}
{\h_{00}(\i)^8})=F(\frac{1}{4},\frac{1}{4},1;1)
=\frac{\G(1)\G(\frac{1}{2})}{\G(\frac{3}{4})
\G(\frac{3}{4})}=\Big(\frac{\pi^{1/4}}{\G(\frac{3}{4})}\Big)^2,\\
$$
which yields 
$$\h_{00}(\i)=\frac{\pi^{1/4}}{\G(\frac{3}{4})}$$
since $\h_{00}(\i)$ is positive real.

\item 
The right hand side of \eqref{eq:3/4} has a simple pole at $\tau=\i$.
Here note that we cannot apply 
Gauss-Kummer's identity \eqref{eq:G-K} to 
the left hand side of \eqref{eq:3/4} for $\tau=\i$
since $c-a-b$ for $(a,b,c)=(\frac{3}{4},\frac{3}{4},1)$ is 
$1-\frac{3}{4}-\frac{3}{4}=-\frac{1}{2}<0$.
\end{enumerate}
\end{remark}

We extend Theorem \ref{th:theta=HGS} to formulas for 
the whole space $\H$.

\begin{cor}
\label{cor:extend-J-1/4}
The equalities \eqref{eq:1/4} and \eqref{eq:3/4} are extended to 
\begin{align}
\label{eq:ex-Jacobi1/4}
&\hspace{1.8cm}\begin{array}{ll}
\h_{00}(\tau)^2
&=F_\rho(\frac{1}{4},\frac{1}{4},1;\zeta(\tau))\\
&=(g_{21}\tau_0+g_{22})F(\frac{1}{4},\frac{1}{4},1;\zeta(\tau_0))
=\dfrac{\det(g) F(\frac{1}{4},\frac{1}{4},1;\zeta(\tau))}{-g_{21}\tau+g_{11}},\\
\end{array}
\\
\label{eq:ex-Jacobi3/4}
&\begin{array}{ll}
\dfrac{\h_{00}(\tau)^6}{\h_{01}(\tau)^4-\h_{10}(\tau)^4}
&=F_\rho(\frac{3}{4},\frac{3}{4},1;\zeta(\tau))\\
&=\dfrac{g_{21}\tau_0+g_{22}}{\det(g)}
F(\frac{3}{4},\frac{3}{4},1;\zeta(\tau_0))
=\dfrac{F(\frac{3}{4},\frac{3}{4},1;\zeta(\tau))}
{-g_{21}\tau+g_{11}},
\end{array}
\end{align}
where $\tau$ is any element in $\H$, $\tau_0\in \D_{12}$ and 
$g=(g_{ij})\in \Gamma(2,4)\la \i J\ra$ 
satisfy 
$$\tau=g\cdot \tau_0=\frac{g_{11}\tau_0+g_{12}}{g_{21}\tau_0+g_{22}},
$$
$\rho$ is the image of a path $\tau_0$ to $\tau$ in $\H$ under the 
map $\H\ni\tau \mapsto \zeta(\tau)\in \C-\{1\}$, 
$F(a,b,c;z)$ is extended to 
the single-valued function on $\C-\{1\}$ given in Remark 
\ref{rem:discontinuous}, and 
$F_\rho(a,b,c;w)$ is its analytic continuation along $\rho$.
\end{cor}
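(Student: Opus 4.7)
The plan is to carry through, for both \eqref{eq:ex-Jacobi1/4} and \eqref{eq:ex-Jacobi3/4}, the three-step chain used in the proof of Corollary \ref{cor:Jacobi-formula}: analytic continuation, the theta transformation, and Möbius arithmetic. For \eqref{eq:ex-Jacobi3/4} one additional ingredient, a transformation law for $\h_{01}(\tau)^4-\h_{10}(\tau)^4$, has to be worked out.

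The leftmost equality in each of \eqref{eq:ex-Jacobi1/4} and \eqref{eq:ex-Jacobi3/4} comes from analytic continuation: Theorem \ref{th:theta=HGS} supplies the identities on the interior of $\D_{12}$, both sides are meromorphic on $\H$, and the right-hand sides are obtained by continuing the single-valued branches of $F(\frac{1}{4},\frac{1}{4},1;\cdot)$ and $F(\frac{3}{4},\frac{3}{4},1;\cdot)$ from Remark \ref{rem:discontinuous} along $\rho=\zeta\circ\tilde\rho$, where $\tilde\rho$ is any path in $\H$ from $\tau_0$ to $\tau$. The identity theorem propagates the equalities to all of $\H$.

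For the middle and rightmost equalities of \eqref{eq:ex-Jacobi1/4}, write $\tau=g\cdot\tau_0$ with $g\in\Gamma(2,4)\la \i J\ra$. Fact \ref{fact:trans-theta} combined with Lemma \ref{lem:factorG2} yields
$$\h_{00}(\tau)^2=(g_{21}\tau_0+g_{22})\,\h_{00}(\tau_0)^2,$$
while $\zeta(\tau)=\zeta(\tau_0)$ by Fact \ref{fact:Inv-S-map}\itref{item:zeta}, because $\mathrm{P}(\Gamma(2,4)\la \i J\ra)=\mathrm{P}\Gamma_{12}$ as recorded in Example \ref{ex:(1/4,1/4,1)}. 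Substituting \eqref{eq:1/4} at $\tau_0$ gives the middle equality, and the rightmost one follows from the Möbius identity $(g_{21}\tau_0+g_{22})(-g_{21}\tau+g_{11})=\det(g)$, reproducing the computation at the end of the proof of Corollary \ref{cor:Jacobi-formula}.

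The new work for \eqref{eq:ex-Jacobi3/4} is the transformation law for $\h_{01}^4-\h_{10}^4$. Squaring and using Jacobi's identity \eqref{eq:J-Id} together with the definition of $\zeta$,
$$\bigl(\h_{01}(\tau)^4-\h_{10}(\tau)^4\bigr)^2=\bigl(\h_{01}(\tau)^4+\h_{10}(\tau)^4\bigr)^2-4\h_{01}(\tau)^4\h_{10}(\tau)^4=\h_{00}(\tau)^8\bigl(1-\zeta(\tau)\bigr).$$
Feeding the transformation of $\h_{00}^2$ and invariance of $\zeta$ into this squared identity yields, upon extracting the square root,
$$\h_{01}(\tau)^4-\h_{10}(\tau)^4=\e(g)\,(g_{21}\tau_0+g_{22})^2\bigl(\h_{01}(\tau_0)^4-\h_{10}(\tau_0)^4\bigr)$$
with a multiplicative sign $\e(g)\in\{\pm 1\}$. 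I would verify $\e(g)=\det(g)$ by checking on generators of $\Gamma(2,4)\la \i J\ra$: on standard generators of $\Gamma(2,4)$ the relation follows from the elementary period and automorphy properties of the $\h_{ij}^4$, all with $\det=1$; and for $g=\i J$, i.e.\ $\tau\mapsto -1/\tau$, Fact \ref{fact:J-act} directly gives $\h_{01}(-1/\tau)^4-\h_{10}(-1/\tau)^4=\tau^2\bigl(\h_{01}(\tau)^4-\h_{10}(\tau)^4\bigr)$, matching $\det(\i J)(-\i\tau)^2=(-1)(-\tau^2)=\tau^2$. With this in hand, combining with $\h_{00}(\tau)^6=(g_{21}\tau_0+g_{22})^3\h_{00}(\tau_0)^6$ and \eqref{eq:3/4} at $\tau_0$ produces the middle and rightmost equalities of \eqref{eq:ex-Jacobi3/4}. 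The main obstacle is precisely this sign determination, which the squared identity alone does not resolve and which forces the case analysis on generators rather than a single appeal to one of the cited facts.
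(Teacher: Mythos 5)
Your treatment of \eqref{eq:ex-Jacobi1/4} is exactly the paper's: analytic continuation for the first equality, Fact \ref{fact:trans-theta} with Lemma \ref{lem:factorG2} plus the invariance of $\zeta$ for the second, and the M\"obius identity $(g_{21}\tau_0+g_{22})(-g_{21}\tau+g_{11})=\det(g)$ for the third. For \eqref{eq:ex-Jacobi3/4} you take a genuinely different route. You square $\h_{01}^4-\h_{10}^4$ to get $\h_{00}^8(1-\zeta(\tau))$ and are then forced to pin down a sign $\e(g)\in\{\pm1\}$, which you propose to do by checking that $\e$ is a character and evaluating it on generators of $\Gamma(2,4)\la \i J\ra$; you carry this out for $\i J$ via Fact \ref{fact:J-act} (correctly) but only sketch the $\Gamma(2,4)$ part. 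The paper sidesteps the sign problem entirely: it factors the left side as $\h_{00}(g\cdot\tau_0)^2\cdot\frac{\h_{00}(g\cdot\tau_0)^4}{\h_{01}(g\cdot\tau_0)^4-\h_{10}(g\cdot\tau_0)^4}$ and observes that the second factor is the weight-zero quantity $1/(1-2\lambda)$ by Jacobi's identity \eqref{eq:J-Id}, hence is invariant under $g\in\Gamma(2,4)\subset\Gamma(2)$ by the $\Gamma(2)$-invariance of $\lambda$ in Fact \ref{fact:Inv-S-map}~(\ref{item:lambda}), while for $g\in(\i J)\cdot\Gamma(2,4)$ Fact \ref{fact:J-act} swaps $\h_{01}$ and $\h_{10}$ and so flips the sign, producing $\det(g)$ directly. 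The two arguments prove the same thing; the paper's buys a one-line sign determination from facts already on the table, whereas yours requires identifying generators of $\Gamma(2,4)$ and verifying the automorphy of $\h_{ij}^4$ on each, which you have not actually done. You correctly diagnose the sign determination as the crux, but I would recommend replacing the generator check by the $1-2\lambda$ observation, which closes the one unexecuted step in your plan.
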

\begin{proof} 
We can show 
$$\h_{00}(\tau)^2
=F_\rho(\frac{1}{4},\frac{1}{4},1;\zeta(\tau))
=(g_{21}\tau_0+g_{22})F(\frac{1}{4},\frac{1}{4},1;\zeta(\tau_0))
$$
quite similarly to Proof of Corollary \ref{cor:Jacobi-formula}.  
Since 
$$\det(g)=\left\{\begin{array}{rcl} 
 1 &\textrm{if} & g\in \Gamma(2,4),\\
-1 &\textrm{if} & g\in (\i J)\cdot \Gamma(2,4),
 \end{array}
\right.
$$
we have 
$$g_{21}\tau_0+g_{22}=\frac{1}{\det(g)(-g_{21}\tau+g_{11})}
$$
for any $g\in \Gamma(2,4)\la \i J\ra$, 
which leads to the last equality in \eqref{eq:ex-Jacobi1/4}.

To show \eqref{eq:ex-Jacobi3/4}, we have only to note that 
\begin{align*}
\frac{\h_{00}(\tau)^6}{\h_{01}(\tau)^4-\h_{10}(\tau)^4}
&=\h_{00}(g\cdot \tau_0)^2
\frac{\h_{00}(g\cdot \tau_0)^4}
{\h_{01}(g\cdot \tau_0)^4-\h_{10}(g\cdot \tau_0)^4}, \\
\h_{00}(g\cdot \tau_0)^2&=(g_{21}\tau_0+g_{22})\h_{00}(\tau_0)^2, \\
\frac{\h_{00}(g\cdot \tau_0)^4}
{\h_{01}(g\cdot \tau_0)^4-\h_{10}(g\cdot \tau_0)^4}
&=
\left\{\begin{array}{ccl}
\dfrac{\h_{00}(\tau_0)^4}{\h_{01}(\tau_0)^4-\h_{10}(\tau_0)^4}
&\textrm{if} & g\in \Gamma(2,4),\\[4mm]
\dfrac{\h_{00}(\tau_0)^4}{\h_{10}(\tau_0)^4-\h_{01}(\tau_0)^4}
&\textrm{if} & g\in (\i J)\cdot \Gamma(2,4),\\
         \end{array}
 \right.
\end{align*}
by \eqref{eq:J-Id} and  Facts \ref{fact:trans-theta}, \ref{fact:J-act},  
 \ref{fact:Inv-S-map} (\ref{item:lambda}).
\end{proof}


By regarding $\tau$ as a dependent variable of $z$ in 
the domain $\{z\in \C\mid |z|<1,\ |z-1|<1\}$
by  Schwarz's map \eqref{eq:S-Map-1/4}, 
we rewrite Theorem \ref{th:theta=HGS} into the following. 
\begin{cor}
By the restriction of Schwarz's map 
$z\mapsto \tau(z)$ in \eqref{eq:S-Map-1/4} 
to the domain $\{z\in \C\mid |z|<1,\ |z-1|<1\}$,   
the equalities \eqref{eq:1/4}, \eqref{eq:3/4} are pulled back to 
$$F(\frac{1}{4},\frac{1}{4},1;z)=
\h_{00}(\tau(z))^2,\quad 
F(\frac{3}{4},\frac{3}{4},1;z)=
\frac{\h_{00}(\tau(z))^6}{\h_{01}(\tau(z))^4-\h_{10}(\tau(z))^4}.
$$
\end{cor}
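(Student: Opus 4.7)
The plan is to transfer the identities \eqref{eq:1/4} and \eqref{eq:3/4} from the $\tau$-plane to the $z$-plane via the substitution $\tau=\tau(z)$ given by Schwarz's map \eqref{eq:S-Map-1/4}. By Fact \ref{fact:Inv-S-map}(\ref{item:zeta}), this map is the inverse of $\zeta\colon\D_{12}\to\C-\{0\}$, so $\zeta(\tau(z))=z$ whenever $\tau(z)\in\D_{12}$; substituting this equality into \eqref{eq:1/4} and \eqref{eq:3/4} immediately yields the claimed pull-back formulas.

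First, I would verify the two formulas on the real segment $(0,1)\subset L$, where $L=\{z\in\C\mid |z|<1,\ |z-1|<1\}$ denotes the lens domain. For $z\in(0,1)$ both hypergeometric series appearing in \eqref{eq:S-Map-1/4} take positive real values, so $\tau(z)$ is purely imaginary with positive imaginary part; a direct computation using Gauss-Kummer's identity \eqref{eq:G-K} at $z=1$ (which, together with the theta computation in the remark after Theorem \ref{th:theta=HGS}, yields $\tau(1)=\i$) and the logarithmic divergence of $F(\frac{1}{4},\frac{1}{4},\frac{1}{2};1-z)$ as $z\to 0^{+}$ (forcing $\tau(z)\to+\i\infty$) shows that $\tau((0,1))=\i\cdot(1,\infty)$, which sits in the interior of $\D_{12}$. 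Theorem \ref{th:theta=HGS} then applies along this segment and, together with $\zeta(\tau(z))=z$, gives both target identities on $(0,1)$.

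To pass from $(0,1)$ to the full lens $L$, I would invoke the identity theorem for holomorphic functions. The left-hand sides are holomorphic on $|z|<1\supset L$. For the right-hand sides, the numerator of \eqref{eq:S-Map-1/4} is holomorphic on $L$ because $F(\frac{1}{4},\frac{1}{4},1;z)$ is holomorphic on $|z|<1$ and $F(\frac{1}{4},\frac{1}{4},\frac{1}{2};1-z)$ is holomorphic on $|1-z|<1$. Assuming that $\tau(z)$ extends holomorphically to a map $L\to\H$, equivalently that the denominator $F(\frac{1}{4},\frac{1}{4},1;z)$ does not vanish on $L$, both sides of each identity are holomorphic on $L$ and agree on $(0,1)$, hence coincide throughout $L$.

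The main obstacle is therefore to establish that $\tau$ defined by \eqref{eq:S-Map-1/4} is a well-defined holomorphic map $L\to\H$. This reduces to showing that the single-valued branch of $\tau$ coming from Fact \ref{fact:Inv-S-map}(\ref{item:zeta}) continues over the simply connected domain $L$; since $L$ is simply connected and contains $(0,1)$, along which $\tau(z)$ has been explicitly computed to lie in $\D_{12}^{\circ}$, such a continuation exists and keeps $\tau(z)$ in $\H$, completing the argument.
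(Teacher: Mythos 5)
Your proposal is correct and follows essentially the same route the paper takes (the paper offers no separate argument beyond the substitution $\tau=\tau(z)$ and the inverse relation $\zeta(\tau(z))=z$ from Fact \ref{fact:Inv-S-map}(\ref{item:zeta}), which is exactly your core step); your added verifications on $(0,1)$ and the identity-theorem extension merely make explicit what the paper leaves implicit. The only loose point is your final one-sentence claim that the continuation of $\tau$ over the lens stays in $\H$ with nonvanishing denominator $F(\frac{1}{4},\frac{1}{4},1;z)$ -- this is best closed by citing Fact \ref{fact:Schwarz} and Fact \ref{fact:Inv-S-map}(\ref{item:zeta}), which assert that Schwarz's map for these parameters lands in $\H$ with single-valued inverse $\zeta$.
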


\section{A transformation formula for $F(\frac{1}{4},\frac{1}{4},1;z)$.}
\begin{theorem}
\label{th:trans}
We have a transformation formula 
\begin{equation}
\label{eq:FE:HG}
\frac{2+\sqrt{2+2w}}{4}
F(\frac{1}{4},\frac{1}{4},1;1-w^2)
=F\Big(\frac{1}{4},\frac{1}{4},1;1-
\frac{(6\sqrt{2w+2}-w-3)^2}{(2\sqrt{2w+2}+w+3)^2}\Big),
\end{equation}
where $w$ is in a neighborhood of $1$, and $\sqrt{2+2w}$ takes $2$ at $w=1$.

\end{theorem}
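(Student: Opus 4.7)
The plan is to interpret both sides of (\ref{eq:FE:HG}) as squared theta constants via Theorem \ref{th:theta=HGS}, with the variable $w$ encoding a point $\tau \in \D_{12}$ near $\i\infty$ and the right-hand side corresponding to $2\tau$. First I would introduce $s = \h_{01}(\tau)^2/\h_{00}(\tau)^2$ so that Jacobi's identity (\ref{eq:J-Id}) forces $\h_{10}(\tau)^4/\h_{00}(\tau)^4 = 1 - s^2$ and hence
$$\z(\tau) = \frac{4\h_{01}(\tau)^4\h_{10}(\tau)^4}{\h_{00}(\tau)^8} = 4s^2(1-s^2).$$
Setting $w = 2s^2 - 1$ then produces $1 - w^2 = \z(\tau)$. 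As $\tau \to \i\infty$ one has $s \to 1$ and $w \to 1$, and the stated branch convention $\sqrt{2+2w} = 2$ at $w=1$ matches $\sqrt{2+2w} = 2s$ throughout the relevant neighborhood, so $w$ genuinely plays the role of the variable in the theorem.

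With this dictionary in place, two consequences of Fact \ref{fact:2tau} complete the argument. The first doubling identity yields
$$\frac{\h_{00}(2\tau)^2}{\h_{00}(\tau)^2} = \frac{\h_{00}(\tau)^2 + \h_{01}(\tau)^2}{2\h_{00}(\tau)^2} = \frac{1+s}{2} = \frac{2 + \sqrt{2+2w}}{4},$$
which is precisely the scalar prefactor appearing in (\ref{eq:FE:HG}). Applying the three doubling formulas to the definition of $\z$ and simplifying, I expect
$$\z(2\tau) = \frac{16\, s\,(1-s)^2}{(1+s)^4},\qquad 1 - \z(2\tau) = \frac{(s^2 - 6s + 1)^2}{(s+1)^4}.$$
Substituting $s = \sqrt{2+2w}/2$ and clearing a common factor of $\tfrac12$ from numerator and denominator of the resulting fraction then rewrites the last expression as
$$1 - \z(2\tau) = \left(\frac{6\sqrt{2+2w} - w - 3}{2\sqrt{2+2w} + w + 3}\right)^2,$$
which is exactly the argument of $F$ on the right-hand side of (\ref{eq:FE:HG}).

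Chaining the two applications of Theorem \ref{th:theta=HGS} at $\tau$ and at $2\tau$ gives
$$\frac{2+\sqrt{2+2w}}{4}\, F(\tfrac14,\tfrac14,1; 1 - w^2) = \frac{1+s}{2}\, \h_{00}(\tau)^2 = \h_{00}(2\tau)^2 = F(\tfrac14,\tfrac14,1; \z(2\tau)),$$
which is (\ref{eq:FE:HG}) on the one-parameter family $\tau \mapsto w(\tau)$ originating at $\i\infty$. Since both sides are holomorphic in $w$ in a neighborhood of $1$ and agree along this curve, the identity extends to a full neighborhood of $w = 1$ as stated.

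The main obstacle I anticipate is the polynomial identity
$$(1+s)^4 - 16\, s\, (1-s)^2 = (s^2 - 6s + 1)^2,$$
which is the key step that puts $1 - \z(2\tau)$ into a form susceptible to a clean square-root extraction; once this factorisation is in hand, the re-expression in terms of $w$ is a routine substitution, and the rest of the proof consists of the two applications of Theorem \ref{th:theta=HGS} and one application of the doubling formula for $\h_{00}$.
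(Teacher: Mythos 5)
Your proposal is correct and follows essentially the same route as the paper: both sides are identified with $\h_{00}(\tau)^2$ and $\h_{00}(2\tau)^2$ via Theorem \ref{th:theta=HGS}, the twice formulas \eqref{eq:2tau} supply the prefactor and the new argument, and your $w=2s^2-1$ is exactly the paper's substitution $w=(2\h_{01}(\tau)^4-\h_{00}(\tau)^4)/\h_{00}(\tau)^4$. The only cosmetic difference is that you compute $\z(2\tau)$ directly from its definition (using all three doubling formulas and the factorisation $(1+s)^4-16s(1-s)^2=(s^2-6s+1)^2$, which does check out), whereas the paper reuses the identity $\z=1-w^2$ at $2\tau$ and needs only the first two doubling formulas.
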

\begin{proof} 
By substituting $2\tau$ into $\tau$ for \eqref{eq:1/4}, 
and using \eqref{eq:2tau}, we have
\begin{equation}
\label{eq:key-id}
F(\frac{1}{4},\frac{1}{4},1;\zeta(2\tau))=\h_{00}(2\tau)^2=
\frac{\h_{00}(\tau)^2+\h_{01}(\tau)^2}{2}
=\frac{\h_{00}(\tau)^2+\h_{01}(\tau)^2}{2\h_{00}(\tau)^2}\cdot \h_{00}(\tau)^2,
\end{equation} 
where we restrict $\tau\in \H$ to pure imaginary numbers with 
sufficiently large imaginary part. 
Note that 
\begin{align*}
\zeta(\tau)&=\frac{4\h_{01}(\tau)^4\h_{10}(\tau)^4}{\h_{00}(\tau)^8}
=1-\frac{(\h_{01}(\tau)^4+\h_{10}(\tau)^4)^2-4\h_{01}(\tau)^4\h_{10}(\tau)^4}
{\h_{00}(\tau)^8}\\
&=1-\frac{(\h_{01}(\tau)^4-\h_{10}(\tau)^4)^2}{\h_{00}(\tau)^8}
=1-\Big(\frac{2\h_{01}(\tau)^4-\h_{00}(\tau)^4}{\h_{00}(\tau)^4}\Big)^2.
\end{align*}
Put 
$$w=\frac{2\h_{01}(\tau)^4-\h_{00}(\tau)^4}{\h_{00}(\tau)^4},
$$
then we have
$$\h_{01}(\tau)^4=\frac{w+1}{2}\cdot \h_{00}(\tau)^4,
\quad \h_{01}(\tau)^2=\frac{\sqrt{2w+2}}{2}\cdot \h_{00}(\tau)^2,
$$
the last term of \eqref{eq:key-id} is 
$$
\frac{\h_{00}(\tau)^2+\h_{01}(\tau)^2}{2\h_{00}(\tau)^2}\cdot \h_{00}(\tau)^2
=
\frac{2+\sqrt{2w+2}}{4}\cdot F(\frac{1}{4},\frac{1}{4},1;1-w^2),
$$
which is the left hand side of \eqref{eq:FE:HG}. 
Here note that $\h_{01}(\tau)>0$ for any pure imaginary number $\tau$.  

Let us express $\zeta(2\tau)$ in terms of $w$. We have 
\begin{align*}
\zeta(2\tau)&
=1-\Big(\frac{2\h_{01}(2\tau)^4-\h_{00}(2\tau)^4}{\h_{00}(2\tau)^4}\Big)^2
=1-\Big(\frac{8\h_{00}(\tau)^2\h_{01}(\tau)^2-(\h_{00}(\tau)^2+\h_{01}(\tau)^2)^2}
{\h_{00}(\tau)^2+\h_{01}(\tau)^2}\Big)^2\\
&=1-\Big(\frac{16\sqrt{2w+2}-(2+\sqrt{2w+2})^2}{(2+\sqrt{2w+2})^2}\Big)^2
=1-\Big(\frac{6\sqrt{2w+2}-w-3}{2\sqrt{2w+2}+w+3}\Big)^2.
\end{align*}
Hence $F(\frac{1}{4},\frac{1}{4},1;\zeta(2\tau))$ 
in \eqref{eq:key-id} is equal to 
the right hand side of \eqref{eq:FE:HG}.
\end{proof}


\section{Mean iterations}
By referring to \cite[Chapter 8]{BB}, 
we give fundamental properties of mean iterations. 
\begin{definition}[Means]
\label{def:mean}
A mean on a subset $\sS$ in $\R^2$ is a continuous function $m$ defined  on
$\sS$  
satisfying 
\begin{equation}
\label{eq:def-mean}
\min(x,y)\le m(x,y)\le \max(x,y)
\end{equation}
for any $(x,y)\in \sS$. \\
A mean $m(x,y)$ is strict if it satisfies 
\begin{equation}
\label{eq:def-strict}
m(x,y)=x\textrm{ or }m(x,y)=y \quad \Leftrightarrow \quad x=y.
\end{equation}
A mean $m(x,y)$ is homogeneous if $\sS$ and $m$ satisfy
\begin{equation}
\label{eq:homog}
(r\cdot x,r\cdot y)\in \sS,
\quad m(r\cdot x,r\cdot y)=r\cdot m(x,y)
\end{equation}
for any $(x,y)\in \sS$ and any $r\in \R_+=\{x\in \R\mid x>0\}$.\\
A mean $m(x,y)$ is symmetric if 
$\sS$ and $m$ satisfy 
\begin{equation}
\label{eq:symm}
(y,x)\in \sS,
\quad m(y,x)=m(x,y)
\end{equation}
for any $(x,y)\in \sS$. \\
\end{definition}
We set the arithmetic mean on $\R^2$, 
the geometric mean  on $\R_+^2$,  
and the harmonic mean on $\R_+^2$ by 
$$m_A(x,y)=\frac{x+y}{2},\quad m_G(x,y)=\sqrt{xy},\quad 
m_H(x,y)=\frac{2xy}{x+y},$$
respectively. They are strict, homogeneous and symmetric means, 
and they satisfy 
$$m_A(x,y)\ge  m_G(x,y)\ge m_H(x,y)$$
on $\R_+^2$.

We modify \cite[Proposition 8.4]{BB} as follows.
\begin{lemma}[]
\label{lem:comp}
Let $m_0$, $m_1$ and $m_2$ be means on $\sS_0$, $\sS_1$ and $\sS_2$ 
satisfying 
$$(m_1(x,y),m_2(x,y))\in \sS_0$$ for any $(x,y)\in \sS_1\cap \sS_2.$ 
Then the function 
$$m(x,y)=m_0(m_1(x,y),m_2(x,y))$$
is a mean on $\sS_1\cap \sS_2$. 
If two of $m_0,m_1,m_2$ are strict, then so is $m$. 
If $m_1$ and $m_2$ are symmetric, then so is $m$. 
If all three are homogeneous, then so is $m$. 
\end{lemma}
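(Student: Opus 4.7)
The plan is to verify the four assertions one at a time, with continuity and the mean inequality handled first, then strictness, symmetry, and homogeneity as separate short checks.

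First I would establish that $m$ is a mean on $\sS_1 \cap \sS_2$. Continuity of $m$ is immediate from the continuity of $m_0,m_1,m_2$. For the sandwich inequality, fix $(x,y)\in \sS_1\cap \sS_2$; since $m_1,m_2$ are means, both $u:=m_1(x,y)$ and $v:=m_2(x,y)$ lie in $[\min(x,y),\max(x,y)]$. Applying the mean property of $m_0$ at $(u,v)\in \sS_0$ gives $\min(u,v)\le m_0(u,v)\le \max(u,v)$, and chaining this with the previous bounds yields $\min(x,y)\le m(x,y)\le \max(x,y)$.

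Next I would address strictness. Assume two of $m_0,m_1,m_2$ are strict and that $m(x,y)\in\{x,y\}$; without loss of generality take $x\le y$ and $m(x,y)=x$. Since $u,v\ge x$ by the mean property of $m_1,m_2$, the equality $m_0(u,v)=x$ combined with $m_0(u,v)\ge \min(u,v)\ge x$ forces $\min(u,v)=x$. I would then split into cases on which two of the three are strict. If $m_1$ and $m_2$ are both strict, then $u=x$ or $v=x$ directly gives $x=y$ from strictness of the corresponding $m_i$. If $m_0$ is strict together with (say) $m_1$, then $m_0(u,v)=\min(u,v)$ together with strictness of $m_0$ forces $u=v$ and hence $m_0(u,v)=u=v=x$; strictness of $m_1$ (with $m_1(x,y)=x$) then gives $x=y$. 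The case $m_0,m_2$ strict is symmetric. This step is the main point of care since it is the only place where one has to combine the three mean inequalities with the strictness hypothesis in a nontrivial way.

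For symmetry and homogeneity the verification is a one-line substitution. If $m_1,m_2$ are symmetric, then in particular $(y,x)\in \sS_1\cap \sS_2$ for every $(x,y)\in \sS_1\cap \sS_2$, and
\[
m(y,x)=m_0(m_1(y,x),m_2(y,x))=m_0(m_1(x,y),m_2(x,y))=m(x,y).
\]
If all three are homogeneous, then for any $r\in\R_+$ the homogeneity of $m_1,m_2$ places $(rx,ry)$ in $\sS_1\cap \sS_2$ and
\[
m(rx,ry)=m_0(rm_1(x,y),rm_2(x,y))=r\,m_0(m_1(x,y),m_2(x,y))=r\,m(x,y),
\]
using homogeneity of $m_0$ in the middle equality. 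There is no real obstacle in the argument; the only subtlety worth stating carefully is the case analysis in the strictness step, where the bound $\min(u,v)\ge x$ must be invoked before strictness can be applied.
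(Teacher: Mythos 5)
Your proof is correct and follows essentially the same route as the paper's: the same chaining of the three mean inequalities, the same symmetry and homogeneity substitutions, and the same case split for strictness (both sub-means strict versus $m_0$ plus one sub-mean strict, where the subcase $m_1(x,y)=m_2(x,y)$ must be handled via strictness of the remaining $m_i$). The only cosmetic difference is that you argue strictness in the contrapositive direction (assuming $m(x,y)\in\{x,y\}$ and deducing $x=y$) while the paper shows the sandwich inequalities become strict when $x\ne y$; these are equivalent.
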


\begin{proof}
Note that the function $m$ is defined on $\sS_1\cap \sS_2$, and continuous 
on it. 
Since $m_0,m_1,m_2$ are means, they satisfy 
\begin{equation}
\label{eq:m0-ineq}
\min(m_1(x,y),m_2(x,y))\le m_0(m_1(x,y),m_2(x,y))\le \max(m_1(x,y),m_2(x,y)),
\end{equation}
\begin{equation}
\label{eq:m1m2-ineq}
\min(x,y)\le m_1(x,y)\le \max(x,y),\quad \min(x,y)\le m_2(x,y)\le \max(x,y).
\end{equation}
Hence we have 
\begin{equation}
\label{eq:m0m1m2-mean}
\min(x,y)\le m(x,y)\le \max(x,y),
\end{equation}
which show that $m$ is a mean on $\sS_1\cap \sS_2$. 

Suppose that $m_1$ and $m_2$ are strict. If $x\ne y$ then 
the inequalities in \eqref{eq:m1m2-ineq} are strict,  
which imply that the inequalities in \eqref{eq:m0m1m2-mean} become strict. 
Suppose that $m_0$ and $m_i$ ($i=1$ or $i=2$) are strict. 
If $(x,y)\in \sS_1\cap \sS_2$ satisfies $x\ne y$ and 
$m_1(x,y)\ne m_2(x,y)$, then
the inequalities in \eqref{eq:m0-ineq} become strict, since $m_0$ is strict.   
Thus in this case, the inequalities in \eqref{eq:m0m1m2-mean} become strict. 
If $(x,y)\in \sS_1\cap \sS_2$ satisfies $x\ne y$ and $m_1(x,y)=m_2(x,y)$ 
then 
$$m(x,y)=m_0(m_1(x,y),m_2(x,y))=m_0(m_i(x,y),m_i(x,y))=m_i(x,y).$$ 
In this case, the inequalities in \eqref{eq:m0m1m2-mean} become strict,  
since $m_i$ is strict.  
Hence it turns out that $m$ is strict if two of $m_0,m_1,m_2$ are strict. 

If each of $m_i$ ($i=1,2$) is a symmetric mean on $\sS_i$
then we have 
$$(x,y)\in \sS_1\cap\sS_2 \Rightarrow (y,x)\in \sS_1\cap\sS_2,$$
$$m(y,x)=m_0(m_1(y,x),m_2(y,x))=m_0(m_1( x,y),m_2(x,y))=m(x,y),$$
for any $(x,y)\in \sS_1\cap\sS_2$, 
which show that $m$ is symmetric.

If each of $m_i$ ($i=0,1,2$) is a homogeneous mean on $\sS_i$ then 
we have 
\begin{align*}
(x,y)&\in \sS_1\cap \sS_2\Rightarrow (r\cdot x,r\cdot y)\in \sS_1\cap \sS_2,
\\
m(r\cdot x,r\cdot y)&=m_0(m_1(r\cdot x,r\cdot y),m_2(r\cdot x,r\cdot y))
=m_0(r\cdot m_1( x,y),r\cdot m_2( x,y))\\
&=r\cdot m_0( m_1( x,y),m_2( x,y))=r\cdot m( x,y),
\end{align*}
for any $(x,y)\in \sS_1\cap \sS_2$ and any $r \in \R_+$, 
which show that $m$ is homogeneous. 
\end{proof}

\begin{definition}[A mean iteration]
Let $m_1$ and $m_2$ be means  
on $\sS_1$ and $\sS_2$ satisfying $(m_1(x,y),m_2(x,y))\in \sS_1\cap \sS_2$
for any $(x,y)\in \sS_1\cap \sS_2$. 
For any fixed $(x,y)\in \sS_1\cap \sS_2$,  
we have a pair of sequences $\{x_n\}$ and $\{y_n\}$ 
by setting $(x_0,y_0)=(x,y)$ and iterating the application of two means
\begin{equation}
\label{eq:M-iteration}
(x_{n+1},y_{n+1})=(m_1(x_n,y_n),m_2(x_n,y_n))\quad (n\in \N_0=\N\cup\{0\})
\end{equation}
to the previous terms. This construction of a pair of sequences is 
called a mean iteration.
\end{definition}

\begin{lemma}[{\cite[Theorem 8.2]{BB}}]
\label{lem:converge}
Let $m_1$ and $m_2$ be means  on $\sS_1$ and $\sS_2$
satisfying $(m_1(x,y),m_2(x,y))\in \sS_1\cap \sS_2$
for any $(x,y)\in \sS_1\cap \sS_2$. 
Suppose that the restrictions of two means $m_1$ and $m_2$ to 
$\sS_1\cap \sS_2$ are strict, and satisfy one of the following
\begin{enumerate}
\item[$(1)$] \label{item:comparable1} $m_1(x,y)\ge m_2(x,y)$ for 
any $(x,y)\in \sS_1\cap \sS_2$; 
\item[$(2)$] \label{item:comparable2} $m_1(x,y)\le m_2(x,y)$ for any 
$(x,y)\in \sS_1\cap \sS_2$; 
\item[$(3)$] \label{item:comparable3} $m_1(x,y)\le m_2(x,y)$ for $x\le y$,
 $(x,y)\in \sS_1\cap \sS_2$,  and $m_2(x,y)\le m_1(x,y)$ for $y\le x$, 
$(x,y)\in \sS_1\cap \sS_2$.  
\end{enumerate}
Then each of sequences $\{x_n\}$ and $\{y_n\}$ defined by \eqref{eq:M-iteration}
converges monotonously and uniformly on any compact set in $\sS_1\cap \sS_2$, 
and satisfies
$$\lim_{n\to \infty}x_n=\lim_{n\to \infty}y_n.$$
The function $\ds{(x,y)\mapsto \lim_{n\to \infty}x_n(=\lim_{n\to \infty}y_n)}$ 
is a strict mean on $\sS_1\cap \sS_2$, 
which is called the compound mean of $m_1$ and $m_2$ 
and denoted by $m_1\diamond m_2$.
Moreover, $m_1\diamond m_2$ is homogeneous or symmetric 
if each of $m_1$ and $m_2$ is so.
\end{lemma}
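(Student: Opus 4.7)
The plan is to establish, in order: (i) pointwise monotone convergence of both sequences to a common limit, (ii) uniform convergence on compact subsets of $\sS_1\cap\sS_2$, and (iii) the fact that the limiting function $m_1\diamond m_2$ is a strict mean which is symmetric (resp. homogeneous) whenever each of $m_1,m_2$ is. Throughout, write $I_n=[\min(x_n,y_n),\max(x_n,y_n)]$ and $d_n=\max(x_n,y_n)-\min(x_n,y_n)$; the mean property applied to $m_1$ and $m_2$ gives $x_{n+1},y_{n+1}\in I_n$, hence $I_{n+1}\subseteq I_n$ and $d_{n+1}\le d_n$.

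First, a case-by-case monotonicity analysis. In case $(1)$ the condition $m_1\ge m_2$ forces $x_n\ge y_n$ for all $n\ge 1$, so $\{x_n\}_{n\ge 1}$ is non-increasing and $\{y_n\}_{n\ge 1}$ is non-decreasing. Case $(2)$ is symmetric. In case $(3)$ the sign of $x_n-y_n$ is preserved by the iteration (this is exactly what the two-sided comparability hypothesis encodes), so $\{x_n\}$ and $\{y_n\}$ are monotone from $n=0$. In every case, bounded monotone sequences yield pointwise limits $x_\infty,y_\infty\in I_0$. To show $x_\infty=y_\infty$, pass to the limit in $x_{n+1}=m_1(x_n,y_n)$ using the continuity of $m_1$; this produces $x_\infty=m_1(x_\infty,y_\infty)$, and the strictness of $m_1$ forces $x_\infty=y_\infty$.

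Next, I would upgrade to uniform convergence. From $I_{n+1}\subseteq I_n$ one gets $|x_m-x_n|\le d_n$ for all $m\ge n$ (and similarly for $\{y_n\}$); letting $m\to\infty$ yields $|x_n-(m_1\diamond m_2)(x,y)|\le d_n$. Each $d_n$ is a continuous non-negative function on $\sS_1\cap\sS_2$ decreasing pointwise to $0$, so Dini's theorem gives uniform convergence $d_n\to 0$ on any compact $K\subset\sS_1\cap\sS_2$. Hence $\{x_n\}$ and $\{y_n\}$ converge uniformly on $K$ to the common limit $m_1\diamond m_2$, which is therefore continuous. The mean inequality for $m_1\diamond m_2$ is immediate from $x_n\in I_0$, and strictness follows because, for $x\ne y$, the strictness of $m_1,m_2$ pushes both $x_1,y_1$ strictly inside $(\min(x,y),\max(x,y))$, so $I_1\subsetneq I_0$ contains the limit. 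Symmetry and homogeneity are inherited by observing that coordinate swapping (resp. positive scaling) commutes with the iteration when $m_1,m_2$ have the same invariance, and so passes to the limit.

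The principal difficulty will be treating case $(3)$ on equal footing with cases $(1)$ and $(2)$. The trick is to recognise that the two-sided comparability in $(3)$ is precisely the condition guaranteeing that the sign of $x_n-y_n$ is propagated by the iteration, which is what powers the monotonicity of the individual sequences; once this observation is in place, the remaining estimates (nested intervals, strictness, and the Dini upgrade) apply uniformly across the three cases.
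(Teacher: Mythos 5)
The paper itself gives no proof of this lemma; it is imported verbatim from \cite[Theorem 8.2]{BB}, so there is no in-paper argument to compare against. Judged on its own, your reconstruction is essentially the standard proof and is correct in all its main steps: the nested-interval observation $I_{n+1}\subseteq I_n$, the case analysis showing the sign of $x_n-y_n$ is eventually (cases (1),(2)) or always (case (3)) preserved and hence that both sequences are monotone, the identification of the common limit via strictness, the estimate $|x_n-(m_1\diamond m_2)(x,y)|\le d_n$ combined with Dini's theorem to get uniform convergence on compacta (and thereby continuity of the limit), and the inheritance of strictness, symmetry and homogeneity. One point deserves to be made explicit, because it is the only place the argument could actually fail: when you ``pass to the limit in $x_{n+1}=m_1(x_n,y_n)$'' you are evaluating $m_1$ at $(x_\infty,y_\infty)$, and the hypotheses only guarantee that the orbit lies in $\sS_1\cap\sS_2$, not that this set is closed; if the limit point escapes $\sS_1$ the continuity argument has nothing to act on. The same implicit assumption is made in the cited source, and it is harmless in every application in this paper (where the relevant domains contain the diagonal segments in question), but a careful write-up should either add a closedness hypothesis or note that $m_1$ extends continuously to the closure of the orbit. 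With that caveat recorded, the proposal is complete; the Dini step is a slightly slicker route to uniform convergence than the direct $\e$--$\delta$ estimate usually given, and everything else matches the textbook argument.
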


\begin{lemma}[{Invariant Principle \cite[Theorem 8.3]{BB}}]
\label{lem:IP}
Suppose that the compound mean $m_1\diamond m_2$ on $\sS_1\cap \sS_2$ 
exists for given two means $m_1$ on $\sS_1$ and $m_2$ on $\sS_2$,  
which satisfy 
$(m_1(x,y),m_2(x,y))\in \sS_1\cap \sS_2$
for any $(x,y)\in \sS_1\cap \sS_2$. 
Then it is uniquely characterized as 
a continuous function 
$\psi:\sS_1\cap \sS_2\to \R$ 
such that 
$$\psi(m_1(x,y),m_2(x,y))=\psi(x,y)$$ 
for any $(x,y)\in \sS_1\cap \sS_2$.
\end{lemma}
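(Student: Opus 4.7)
The plan is to split the argument cleanly into an \emph{existence} part (verifying $m_1\diamond m_2$ itself satisfies the functional equation $\psi(m_1(x,y),m_2(x,y))=\psi(x,y)$) and a \emph{uniqueness} part (showing any other continuous invariant candidate must coincide with it). I read the stated characterization as tacitly including the normalization $\psi(x,x)=x$ on the diagonal of $\sS_1\cap\sS_2$, since without it every constant function would be invariant and uniqueness would fail; this normalization is built into the compound mean because $m_1\diamond m_2(x,x)=x$ follows from $m_i(x,x)=x$ for each $i$.

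For existence, I would start the iteration \eqref{eq:M-iteration} at $(x_0,y_0)=(x,y)$, producing $\{(x_n,y_n)\}$ with common limit $L:=m_1\diamond m_2(x,y)$ by Lemma \ref{lem:converge}. Restarting the iteration at $(x_0',y_0'):=(m_1(x,y),m_2(x,y))=(x_1,y_1)$ simply reproduces the shifted tail $(x_n',y_n')=(x_{n+1},y_{n+1})$, and the tail of a convergent sequence has the same limit. Hence $m_1\diamond m_2(m_1(x,y),m_2(x,y))=L=m_1\diamond m_2(x,y)$.

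For uniqueness, let $\psi$ be any continuous function on $\sS_1\cap\sS_2$ satisfying the stated invariance. A straightforward induction on $n$, using the hypothesis $(m_1(x_n,y_n),m_2(x_n,y_n))\in\sS_1\cap\sS_2$ at every stage, yields $\psi(x_n,y_n)=\psi(x,y)$ for all $n\in\N_0$. By Lemma \ref{lem:converge} we have $(x_n,y_n)\to(L,L)$, so continuity of $\psi$ forces $\psi(x,y)=\psi(L,L)$; the normalization then gives $\psi(L,L)=L=m_1\diamond m_2(x,y)$, completing the identification $\psi=m_1\diamond m_2$.

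There is no real conceptual obstacle once Lemma \ref{lem:converge} is in place; the argument is essentially one line of induction followed by passage to the limit. The only point requiring care is that each iterate $(x_n,y_n)$ must genuinely lie in the common domain $\sS_1\cap\sS_2$ so that $\psi(x_n,y_n)$ is well defined throughout the induction — and this is exactly what the closure condition $(m_1(x,y),m_2(x,y))\in\sS_1\cap\sS_2$ in the hypothesis guarantees, which is why that condition is built into the statement in the first place.
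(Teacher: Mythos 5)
The paper offers no proof of this lemma at all --- it is quoted from \cite[Theorem 8.3]{BB} --- so there is nothing internal to compare against; your argument is essentially the standard one from that reference, and it is correct. In particular, you were right to flag that the statement as literally written (``a continuous function $\psi$ such that $\psi(m_1(x,y),m_2(x,y))=\psi(x,y)$'') cannot characterize anything uniquely, since every constant function is invariant; the missing normalization $\psi(x,x)=x$ (equivalently, the requirement that $\psi$ be a mean) is exactly what Borwein--Borwein build into their version, and your existence and uniqueness halves --- the shift-of-tail observation for existence, and the induction $\psi(x_n,y_n)=\psi(x,y)$ followed by passage to the limit for uniqueness --- are the intended proof.

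Two small points of hygiene. First, you cite Lemma \ref{lem:converge} for the convergence $(x_n,y_n)\to(L,L)$, but that lemma carries extra hypotheses (strictness and one of the comparability conditions) that Lemma \ref{lem:IP} does not assume; the convergence you need is already packaged into the hypothesis ``the compound mean exists,'' so you should appeal to that directly rather than to Lemma \ref{lem:converge}. Second, the step $\psi(x_n,y_n)\to\psi(L,L)$ tacitly requires the diagonal point $(L,L)$ to lie in $\sS_1\cap\sS_2$, which need not follow from $(x_n,y_n)\in\sS_1\cap\sS_2$ alone when the domain is not closed; for the domains actually used in this paper (such as $\sS_{(-1,1]}$ with $L>0$) this is harmless, but it deserves a sentence if you want the argument to be airtight in the stated generality.
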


\begin{example}[The arithmetic-geometric mean]
The arithmetic-geometric mean $m_{AG}$ is defined by the compound mean 
$m_A\diamond m_G$ of the arithmetic mean $m_1=m_A$ on $\R^2$ and 
the geometric mean $m_2=m_G$ on $\R_+^2$; 
that is, set $(x_0,y_0)=(x,y)\in \R_+^2=\R^2\cap \R_+^2$, 
define a pair of sequences by 
$$x_{n+1}=m_A(x_n,y_n)=\frac{x_n+y_n}2,\quad 
y_{n+1}=m_G(x_n,y_n)=\sqrt{x_ny_n},$$
and take the limit 
$\ds{m_{AG}(x,y)=\lim_{n\to \infty} x_n(=\lim_{n\to \infty} y_n)}$.  

By putting $a=b=\frac{1}{2}$, $z=\frac{w-1}{w+1}$ for \eqref{eq:FE-Gauss1}, 
we have 
\begin{equation}
\label{eq:FE-Gauss}
F(\frac{1}{2},\frac{1}{2},1;1-\frac{4w}{(1+w)^2})
=\frac{1+w}{2} F(\frac{1}{2},\frac{1}{2},1;1-w^2),
\end{equation}
where $w$ is in a small neighborhood $U$ of $1\in \C$.
Substitute $w=\frac{y}{x}$ into this formula, then 
we have 
$$\frac{x} {F(\frac{1}{2},\frac{1}{2},1;1-\frac{y^2}{x^2})}
=\frac{m_A(x,y)}{F(\frac{1}{2},\frac{1}{2},1;1-\frac{m_G(x,y)^2}{m_A(x,y)^2})},
$$
which implies
\begin{equation} 
\label{eq:AGM-HGF}
m_{AG}(x,y)=\frac{x}{F(\frac{1}{2},\frac{1}{2},1;1-\frac{y^2}{x^2})}
\end{equation}
by Lemma \ref{lem:IP}. 
Since \eqref{eq:FE-Gauss} is valid on $U$, 
\eqref{eq:AGM-HGF} holds initially for $x,y$ close to each other. 
Note that the set $\{1-y^2/x^2\mid (x,y)\in \R_+^2\}$ coincides with 
the open interval $(-\infty, 1)$, on which 
$F(\frac{1}{2},\frac{1}{2},1;z)$ admits the unique analytic continuation 
satisfying \eqref{eq:FE-Gauss} as in Remark \ref{rem:discontinuous}. 
By regarding the right hand side of
\eqref{eq:AGM-HGF} as its analytic continuation, we can extend 
\eqref{eq:AGM-HGF} to a formula on $\R_+^2$. 
 \end{example}

Recall the formula \eqref{eq:FE:HG}:
$$
 \frac{2+\sqrt{2+2w}}{4}
 F(\frac{1}{4},\frac{1}{4},1;1-w^2)
 =F\Big(\frac{1}{4},\frac{1}{4},1;1-
 \frac{(6\sqrt{2w+2}-w-3)^2}{(2\sqrt{2w+2}+w+3)^2}\Big).
$$

By taking the factor in the left hand side of \eqref{eq:FE:HG}, 
we set
$$\mu_1(w)=\frac{2+\sqrt{2+2w}}{4}.$$
We also set 
$$\mu_2(w)=\frac{6\sqrt{2+2w}-w-3}{2(2+\sqrt{2+2w})}$$
so that 
$$\frac{\mu_2(w)^2}{\mu_1(w)^2}=\frac{(6\sqrt{2w+2}-w-3)^2}
{(2\sqrt{2w+2}+w+3)^2},$$ 
which is in the right hand side of \eqref{eq:FE:HG}. 
We extend them to homogeneous functions $\mu_i(x,y)$ $(i=1,2)$ 
of degree $1$ defined on $\R_+^2$ by setting $x\cdot \mu_i(y/x)$ $(i=1,2)$. 
They are 
\begin{equation}
\label{eq:mu1,2}
\mu_1(x,y)=\frac{2x+\sqrt{2x(x+y)}}{4},\quad 
\mu_2(x,y)=\frac{6\sqrt{2x(x+y)}-y-3x}{2(2x+\sqrt{2x(x+y)})}\cdot x.
\end{equation}

To understand $\mu_1$ and $\mu_2$ well, we prepare functions 
\begin{align*}
\mu_0(x,y)&=m_G(x,m_A(x,y))=\sqrt{x\cdot \frac{x+y}{2}},\\
\nu(x,y)&=2m_H(x,y)-m_A(x,y)=\frac{4xy}{x+y}-\frac{x+y}{2}
=\frac{6xy-x^2-y^2}{2(x+y)},
\end{align*}
defined on $\sS_{(-1,\infty)}=\{(x,y)\in \R^2\mid x>0,\ x+y>0\}$
and on $\R_+^2$. 
Here we introduce a notation 
$$\sS_{I}=\{(x,rx)\in \R^2\mid x\in \R_+,\ r\in I\}$$
for an interval $I$ in $\R$. For examples,  $\sS_{(0,\infty)}=\R_+^2$ and  
$$\sS_{(-1,1]}=\{(x,rx)\in \R^2\mid x\in \R_+,\ r\in (-1,1]\}
=\{(x,y)\in \R^2\mid x\in \R_+,\ -x< y\le x\}.$$ 

\begin{lemma}
\label{lem:mu0}
The function $\mu_0(x,y)$ is a positive-valued strict homogeneous mean on 
$\sS_{(-1,\infty)}$. 
It satisfies 
\begin{align}
\label{eq:ineq1}
0<y<x  
\quad  &\Rightarrow \quad 
\frac{x}{\sqrt{2}}<\mu_0(x,y)<x,\\
\label{eq:ineq2}
y<x\quad    &\Rightarrow \quad 
0<x-\mu_0(x,y)<\mu_0(x,y)-y.
\end{align}
\end{lemma}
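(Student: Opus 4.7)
The plan is to read all the asserted properties directly from the formula $\mu_0(x,y)=\sqrt{x(x+y)/2}$. Positivity is immediate, since on $\sS_{(-1,\infty)}$ we have both $x>0$ and $x+y>0$, so the radicand is strictly positive. Homogeneity of degree $1$ follows by pulling $r>0$ out of the square root: $\mu_0(rx,ry)=\sqrt{r^2x(x+y)/2}=r\,\mu_0(x,y)$. Continuity on $\sS_{(-1,\infty)}$ is clear from the formula.

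For the mean property I would use the identity $\mu_0(x,y)^2=x\cdot m_A(x,y)$ and split into two cases, always relying on the fact that $\mu_0>0$ so that inequalities between $\mu_0$ and a positive quantity can be handled by squaring. If $y\ge x$, then $\min(x,y)=x$ and $\max(x,y)=y$: the inequality $\mu_0\ge x$ squares to $x+y\ge 2x$, i.e.\ $y\ge x$, and the inequality $\mu_0\le y$ (where $y>0$) squares to $(y-x)(2y+x)\ge 0$, both factors being nonnegative. If $y<x$, then $\min(x,y)=y$ and $\max(x,y)=x$: the inequality $\mu_0\le x$ is again equivalent to $y\le x$. For the lower bound $y\le\mu_0$, when $y\le 0$ it is trivial from $\mu_0>0$, and when $y>0$ it squares to $(x-y)(x+2y)\ge 0$, both factors positive. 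The same bookkeeping yields strictness: $\mu_0(x,y)=x$ forces $y=x$, while $\mu_0(x,y)=y$ forces $y>0$, and since then $x+2y>0$, the equality $(x-y)(x+2y)=0$ collapses to $x=y$.

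The two bounds in \eqref{eq:ineq1} and \eqref{eq:ineq2} follow by the same squaring technique. Under $0<y<x$, the estimate $x^2/2<x(x+y)/2<x^2$ is immediate, and taking square roots yields \eqref{eq:ineq1}. For \eqref{eq:ineq2}, the inequality $x-\mu_0>0$ has already been shown in the previous paragraph, and the remaining inequality $x-\mu_0<\mu_0-y$ rewrites as $x+y<2\mu_0$; both sides are positive since $x+y>0$ and $\mu_0>0$, so squaring reduces it to $(x+y)^2<2x(x+y)$, equivalently $x+y<2x$, i.e.\ $y<x$. The only mildly delicate point in the whole argument is the case split in the mean inequality, forced by the possibility that $y$ is negative while $\mu_0$ is always positive; this is pure bookkeeping, not a real obstacle.
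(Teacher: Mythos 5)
Your proof is correct, but it takes a different route from the paper for the first half of the statement. The paper does not verify the mean axioms by hand: it writes $\mu_0(x,y)=m_G(\pr_x(x,y),m_A(x,y))$, where $\pr_x:(x,y)\mapsto x$ is viewed as a (non-strict) homogeneous mean on $\sS_{(-1,\infty)}$, checks that $(\pr_x(x,y),m_A(x,y))\in\R_+^2$, and invokes the composition Lemma \ref{lem:comp} to conclude in one stroke that $\mu_0$ is a positive-valued strict homogeneous mean (two of the three constituents, $m_G$ and $m_A$, being strict). For the inequalities it again argues by monotonicity: \eqref{eq:ineq1} from $m_G(x,m_A(x,y))>m_G(x,m_A(x,0))=x/\sqrt2$, and \eqref{eq:ineq2} from the chain $y<m_A(x,y)<\mu_0(x,y)<x$, which gives $x-\mu_0<x-m_A=m_A-y<\mu_0-y$. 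Your approach instead reads everything off the closed formula $\mu_0(x,y)^2=x(x+y)/2$ by squaring, with the case split on the sign of $y$; each squaring step is legitimate because you verify positivity of both sides first, and the factorizations $(y-x)(2y+x)$ and $(x-y)(x+2y)$ are correct, as is the reduction of $x-\mu_0<\mu_0-y$ to $(x+y)^2<2x(x+y)$. What the paper's route buys is reuse: Lemma \ref{lem:comp} is the same device applied again to $\mu_1$ and $\mu_2$ in Proposition \ref{prop:mu1mu2}, so the structural argument generalizes, whereas your elementary computation is self-contained and avoids having to treat $\pr_x$ as a degenerate mean, at the cost of more case bookkeeping. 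Both are complete proofs.
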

\begin{proof}
Regard the projection $pr_x:(x,y)\mapsto x$ as a homogeneous mean on 
$\sS_{(-1,\infty)}$ and restrict the arithmetic mean $m_A$ to $\sS_{(-1,\infty)}$. 
Then they satisfy $(pr_x(x,y),m_A(x,y))\in \R_+^2$ 
for any $(x,y)\in \sS_{(-1,\infty)}$.
Since $m_A$ and $m_G$ are strict and homogeneous on $\R_+^2$,   
$\mu_0(x,y)=m_G(pr_x(x,y),m_A(x,y))$
is a positive-valued strict homogeneous mean on 
$\sS_{(-1,\infty)}$ by Lemma \ref{lem:comp}.

If $0<y<x$ then 
$$x>\mu_0(x,y)=m_G(x,m_A(x,y))>m_G(x,m_A(x,0))=m_G(x,\frac{x}{2})
=\frac{x}{\sqrt{2}}.$$
If $y<x$ then 
$$y<m_A(x,y)=\frac{x+y}{2}<\mu_0(x,y)< x,$$
which yield that 
$$0<x-\mu_0(x,y)<x-m_A(x,y)=m_A(x,y)-y<\mu_0(x,y)-y$$
even in the case $y<0$.
\end{proof}

\begin{lemma}
\label{lem:nu}
The restriction of $\nu$ to 
$\sS_{[1/3,3]}=\{(x,y)\in \R_+^2\mid \dfrac{x}{3}\le y\le 3x\}$
is a homogeneous symmetric mean, and that to 
$\sS_{(1/3,3)}=\{(x,y)\in \R_+^2\mid \dfrac{x}{3}< y< 3x\}$ 
becomes strict.
\end{lemma}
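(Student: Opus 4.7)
The plan is to verify each defining property of a strict mean separately, exploiting homogeneity and symmetry to reduce the two-variable inequality $\min(x,y) \le \nu(x,y) \le \max(x,y)$ to an elementary one-variable polynomial inequality. Continuity of $\nu(x,y) = \frac{6xy - x^2 - y^2}{2(x+y)}$ on $\sS_{[1/3,3]} \subset \R_+^2$ is immediate since the denominator $2(x+y)$ stays bounded away from $0$ there. A direct substitution shows $\nu(rx, ry) = r\nu(x,y)$ for $r>0$ and $\nu(y,x) = \nu(x,y)$, so homogeneity and symmetry are free.

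For the mean inequality I would use symmetry to reduce to $x \le y$, and homogeneity to the normalized case $(x,y) = (1,t)$ with $t \in [1,3]$. Then
$$\nu(1,t) = \frac{6t - 1 - t^2}{2(1+t)},$$
so the inequality becomes $1 \le \nu(1,t) \le t$. Multiplying through by $2(1+t) > 0$, the lower bound is equivalent to $-(t-1)(t-3) \ge 0$ and the upper bound to $(3t-1)(t-1) \ge 0$; both factored forms are manifestly nonnegative on $[1,3]$, which shows that $\nu$ is a homogeneous symmetric mean on $\sS_{[1/3,3]}$.

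For strictness on $\sS_{(1/3,3)}$ I would reuse the same two factorizations: they are strictly positive when $t \in (1,3)$, so both mean inequalities become strict whenever $x < y$ with $y/x \in (1,3)$. The symmetric region $y/x \in (1/3, 1)$ is covered by the symmetry of $\nu$, and on the diagonal $x = y$ one checks directly that $\nu(x,x) = x$, matching the equivalence required in Definition~\ref{def:mean}. There is no substantive obstacle; the only point requiring care is recognizing why strictness must fail on the closed cone, which is explained by the factored expressions vanishing at the endpoints $t \in \{1,3\}$ (and by symmetry at $t = 1/3$), and is precisely why the strict statement restricts to the open cone $\sS_{(1/3,3)}$.
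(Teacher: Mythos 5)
Your proposal is correct and follows essentially the same route as the paper: the paper computes $\nu(x,y)-x=\frac{(3x-y)(y-x)}{2(x+y)}$ and $y-\nu(x,y)=\frac{(3y-x)(y-x)}{2(x+y)}$ directly in two variables, which are exactly your factorizations $-(t-1)(t-3)$ and $(3t-1)(t-1)$ after the normalization $(x,y)=(1,t)$, and strictness on the open cone is read off from the same factors in both arguments. The only difference is your explicit use of homogeneity to reduce to one variable, which is cosmetic.
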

\begin{proof}
Since $m_A$ and $m_H$ are homogeneous and symmetric, 
$\nu$ satisfies 
$$\nu(r\cdot x,r\cdot y)=r\cdot \nu( x,y),\quad \nu(y,x)=\nu( x,y)$$
for any $(x,y)\in \R_+^2$ and any $r\in  \R_+$. 
Suppose that $(x,y)\in \sS_{[1/3,3]}$. 
If $x<y$ then 
\begin{align*}
\nu (x,y)-x&=\frac{4xy}{x+y}-\frac{x+y}{2}-x
=\frac{(3x-y)(y-x)}{2(x+y)}\ge 0,\\
y-\nu (x,y)&=y-\frac{4xy}{x+y}+\frac{x+y}{2}
=\frac{(3y-x)(y-x)}{2(x+y)}\ge 0,
\end{align*}
and if $x>y$ then $x-\nu (x,y)\ge 0$ and $\nu (x,y)-y\ge 0$, 
which show that the restriction of $\nu (x,y)$ to $\sS_{[1/3,3]}$ 
is a homogeneous symmetric mean. 
By these inequalities, we can also see that 
the restriction of $\nu$ to $\sS_{(1/3,3)}$ becomes strict.
\end{proof}

\begin{remark}
The function $\nu(x,y)=2m_H(x,y)-m_A(x,y)$ on $\R_+^2$ takes negative values. 
In fact, we have
$$y>(3+2\sqrt{2})x \textrm{ or }y<(3-2\sqrt{2})x \quad \Rightarrow \quad 
 \nu(x,y)<0.$$
\end{remark}

\begin{proposition}
\label{prop:mu1mu2}
\begin{enumerate}[$($i$\,)$]
\item 
\label{item:expr}
We can express the functions $\mu_1$ and $\mu_2$ by $\mu_0$ and $\nu$ as 
\begin{equation}
\label{eq:compound-ex-mu1,2}
\mu_1(x,y)=m_A(x,\mu_0(x,y)),\quad 
\mu_2(x,y)=\nu(x,\mu_0(x,y));
\end{equation}
we can regard the functions $\mu_1(x,y)$ and $\mu_2(x,y)$ 
as defined on 
$$\sS_{(-1,\infty)}=\{(x,y)\in \R^2\mid x>0,\ x+y>0\}.$$ 
\item 
\label{item:means}
The function $\mu_1$  is a strict homogeneous mean on $\sS_{(-1,\infty)}$. 
The function $\mu_2$ satisfies an inequality
\begin{equation}
\label{eq:increase-mu2}
\mu_2(x,y)\ge  y
\end{equation}
for any $(x,y)\in \sS_{(-1,1]}=\{(x,y)\in \R^2\mid x>0,\ -x<y\le x\}$.   
The restriction of $\mu_2$ to the domain 
$$\sS_{(0,17)}=\{(x,y)\in \R_+^2\mid y< 17x\}$$ 
is a strict homogeneous mean. 
\item 
\label{item:ineq}
The functions $\mu_1$ and $\mu_2$ on $\sS_{(-1,\infty)}$ satisfy inequalities 
\begin{align}
\label{eq:mu1>mu2}
&\mu_1(x,y)- \mu_2(x,y)
\ge 0,\\
\label{eq:mu1+mu2>0}
&\mu_1(x,y)+\mu_2(x,y)
> 0,
\end{align}
for any $(x,y)\in \sS_{(-1,\infty)}$.  
If $x\ne y$ then \eqref{eq:mu1>mu2} holds strictly. \\
If $(x,y)\in \sS_{(-1,1]}$ then 
\begin{equation}\mu_1(x,y)+\mu_2(x,y)
\ge x+y\ (>0).
\end{equation}
\end{enumerate}
\end{proposition}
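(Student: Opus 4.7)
The plan is to decompose everything through the formulas in (\ref{item:expr}) and then leverage the structural properties of $m_A$, $m_G$, $m_H$, $\mu_0$, $\nu$. For (\ref{item:expr}) I would verify the two identities by direct substitution of $u := \mu_0(x,y) = \sqrt{x(x+y)/2}$, using $2u = \sqrt{2x(x+y)}$ and $u^2 = x(x+y)/2$; the formula for $\mu_2$ additionally uses $x^2 + u^2 = x(3x+y)/2$. Since $\mu_0(x,y) > 0$ on $\sS_{(-1,\infty)}$ by Lemma \ref{lem:mu0}, the pair $(x,\mu_0(x,y))$ lies in $\R_+^2$, the common domain of $m_A$ and $\nu$, so $\mu_1$ and $\mu_2$ are well defined on $\sS_{(-1,\infty)}$.

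For (\ref{item:means}), the claim that $\mu_1$ is a strict homogeneous mean on $\sS_{(-1,\infty)}$ follows by combining $m_A$ with $\mu_0$ through Lemma \ref{lem:comp}, together with the elementary observation $\min(x,y) < \mu_0(x,y) < \max(x,y)$ for $x \ne y$. For $\mu_2 \geq y$ on $\sS_{(-1,1]}$, I would write
\[
\mu_2 - y = \frac{4u(3x-y) - x(3x+5y)}{4(x+u)}
\]
with positive denominator, and split into the regions $-x < y \leq -3x/5$, where the numerator is positive termwise, and $-3x/5 < y \leq x$, where both quantities being compared are positive, so squaring and using $16u^2 = 8x(x+y)$ reduces the estimate to
\[
(x-y)(63x^2 + 57xy - 8y^2) \geq 0;
\]
the quadratic factor is positive on $-3x/5 < y \leq x$ because its negative root $(57 - \sqrt{5265})x/16$ lies strictly below $-3x/5$. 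The strict-mean property on $\sS_{(0,17)}$ then follows by combining this with the identity
\[
\nu(x,u) - x = -\frac{(u-x)(u-3x)}{2(x+u)}
\]
and the equivalences $\mu_0(x,y) \leq x \Leftrightarrow y \leq x$ and $\mu_0(x,y) \leq 3x \Leftrightarrow y \leq 17x$, which pin down strict inequalities for $\mu_2$ between $x$ and $y$ on the advertised domain.

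For (\ref{item:ineq}), the cleanest route exploits $\nu = 2m_H - m_A$ to collapse $\mu_1 \pm \mu_2$ into
\[
\mu_1 - \mu_2 = 2\bigl(m_A(x,u) - m_H(x,u)\bigr) = \frac{(x-u)^2}{x+u},\qquad
\mu_1 + \mu_2 = 2m_H(x,u) = \frac{4xu}{x+u},
\]
with $u = \mu_0(x,y) > 0$; the first formula gives $\mu_1 \geq \mu_2$ with equality exactly at $x = u$, i.e., $x = y$, and the second immediately gives $\mu_1 + \mu_2 > 0$ on all of $\sS_{(-1,\infty)}$. For the refined bound on $\sS_{(-1,1]}$, substituting $x(x+y) = 2u^2$ reduces $\mu_1 + \mu_2 - (x+y)$ to $u(3x - y - 2u)/(x+u)$, and squaring the nonnegative inequality $3x - y \geq 2u$ reduces further to $(7x-y)(x-y) \geq 0$, which is automatic for $-x < y \leq x$. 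The main obstacle is the $\mu_2 \geq y$ bound: it is the only step where the sharp threshold $r = 17$ enters (through $\mu_0 \leq 3x \Leftrightarrow y \leq 17x$) and where one must locate the roots of the auxiliary quadratic $63x^2 + 57xy - 8y^2$; everything else collapses cleanly via the $\mu_0$--$\nu$ decomposition.
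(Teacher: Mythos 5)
Your proposal is correct, but in several places it substitutes direct algebra for the paper's softer mean-theoretic arguments, and it is worth seeing what each buys. Part (\ref{item:expr}), the claim that $\mu_1$ is a strict homogeneous mean, and the identities $\mu_1-\mu_2=\frac{(x-u)^2}{x+u}$, $\mu_1+\mu_2=\frac{4xu}{x+u}$ in (\ref{item:ineq}) coincide with the paper. The divergences are: (a) for $\mu_2\ge y$ on $\sS_{(-1,1]}$ the paper avoids your quadratic $63x^2+57xy-8y^2$ entirely by reducing to $m_A(x,\mu_0)-y\ge\frac{(x-\mu_0)^2}{x+\mu_0}$ and chaining $m_A(x,\mu_0)-y\ge m_A(x,y)-y=x-m_A(x,y)\ge x-\mu_0\ge\frac{x-\mu_0}{x+\mu_0}(x-\mu_0)$, using only $y<m_A(x,y)\le\mu_0\le x$; (b) for the strict-mean property on $\sS_{(0,17)}$ the paper does not recompute signs of $\nu(x,u)-x$ but checks $(x,\mu_0(x,y))\in\sS_{(1/3,3)}$ for $(x,y)\in\sS_{(0,17)}$ and invokes Lemma \ref{lem:nu} plus the composition Lemma \ref{lem:comp}, which delivers both bounds $\min(x,y)<\mu_2<\max(x,y)$ at once; (c) for $\mu_1+\mu_2\ge x+y$ the paper uses monotonicity of $m_H$ via $2m_H(x,\mu_0)\ge 2m_H(m_A(x,y),m_A(x,y))=x+y$ instead of your reduction to $(7x-y)(x-y)\ge0$. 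Your computations are all verifiable (the factorization $8(x+y)(3x-y)^2-x(3x+5y)^2=(x-y)(63x^2+57xy-8y^2)$ and the root location are right), and they have the merit of making the role of the thresholds $-3x/5$ and $17$ completely explicit; the paper's route is shorter and reuses Lemmas \ref{lem:comp}, \ref{lem:mu0}, \ref{lem:nu}. The one place your sketch is genuinely thin is the upper bound $\mu_2(x,y)<y$ for $x<y<17x$, which is needed for the strict-mean claim on $\sS_{(0,17)}$ but is not covered by your $\sS_{(-1,1]}$ argument: there you must note that the sign of $(x-y)(63x^2+57xy-8y^2)$ reverses for $x<y<\frac{57+\sqrt{5265}}{16}x$, and treat $y>3x$ (where $3x-y<0$ and no squaring is needed) separately before concluding; the composition route in the paper sidesteps this case analysis.
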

\begin{proof}
\itref{item:expr}
By straightforward calculations, we have
\begin{align*}
&m_A(x,\mu_0(x,y))=\frac{x+\sqrt{x\cdot (x+y)/2}}{2}
=\frac{2x+\sqrt{2x(x+y)}}{4}=\mu_1(x,y),\\
&\nu(x,\mu_0(x,y))=2m_H(x,\mu_0(x,y))-m_A(x,\mu_0(x,y))\\
=&
\frac{4x\sqrt{x\cdot(x+y)/2}}{x+\sqrt{x\cdot(x+y)/2}}
-\frac{2x+\sqrt{2x(x+y)}}{4}
=
\frac{16x\sqrt{2x(x+y)}-(2x+\sqrt{2x(x+y)})^2}
{4(2x+\sqrt{2x(x+y)})}\\
=&\frac{-3x^2+6x\sqrt{2x(x+y)}-xy}
{2(2x+\sqrt{2x(x+y)})}
=\mu_2(x,y).
\end{align*}
We can extend them to functions on $\sS_{(-1,\infty)}$ since 
$\mu_0(x,y)$ is defined on it, and takes positive values.

\medskip\noindent\itref{item:means}
Since $m_A$ and $\mu_0$ are strict homogeneous means on 
$\R^2$ and $\sS_{(-1,\infty)}$, and $pr_x:(x,y)\to x$ 
is a homogeneous mean on $\R^2$, 
$\mu_1(x,y)=m_A(pr_x(x,y),\mu_0(x,y))$ is a strict homogeneous mean 
on $\sS_{(-1,\infty)}=\R^2 \cap \sS_{(-1,\infty)}$ by Lemma \ref{lem:comp}. 

Under the assumption $(x,y)\in \sS_{(-1,1]}$, i.e., $x>0$ and $-x<y\le x$,  
we show the inequality \eqref{eq:increase-mu2}. Since 
\begin{align*}
\mu_2(x,y)-y=&
2[m_H(x,\mu_0)-m_A(x,\mu_0)]+m_A(x,\mu_0)-y,\\
&2[m_A(x,\mu_0)-m_H(x,\mu_0)]
=\frac{(x-\mu_0)^2}{x+\mu_0},
\end{align*}
we have 
$$\mu_2(x,y)-y\ge 0\quad \Leftrightarrow \quad
m_A(x,\mu_0)-y\ge \frac{(x-\mu_0)^2}{x+\mu_0},
$$
where $\mu_0$ denotes $\mu_0(x,y)$.
Note that 
$$y<m_A(x,y)<\mu_0<m_A(x,\mu_0)<x,\quad 0\le \frac{x-\mu_0}{x+\mu_0}<1.$$
Then we have 
$$
m_A(x,\mu_0)-y\ge 
m_A(x,y)-y=x-m_A(x,y)\ge 
x-\mu_0
\ge \frac{x-\mu_0}{x+\mu_0}(x-\mu_0)
=\frac{(x-\mu_0)^2}{x+\mu_0}.
$$

To show the restriction of $\mu_2$ to $\sS_{(0,17)}$ is a strict 
homogeneous mean,  
we check that $\sS_{(1/3,3)}$ contains the image of $\sS_{(0,17)}$ under the map 
$$(x,y)\mapsto (x,\mu_0(x,y)).$$
In fact, if $0<y<x$ then $\mu_0(x,y)$ satisfies inequalities 
$$\frac{x}{3}<\frac{x}{\sqrt{2}}<\mu_0(x,y)<x$$
by \eqref{eq:ineq1}. If $x\le y<17x$ then 
$$x\le \mu_0(x,y)=m_G(x,m_A(x,y))<m_G(x,m_A(x,17x))=m_G(x,9x)=3x.$$
Thus we have $(x,\mu_0(x,y))\in \sS_{(1/3,3)}$ for any $(x,y)\in \sS_{(0,17)}$.
Since $\nu$ is a strict homogeneous mean on $\sS_{(1/3,3)}$ as shown 
in Lemma \ref{lem:nu}, 
and the restriction of $\mu_0$ to $\sS_{(0,17)}$ is strict homogeneous
and that of $\pr_x$ to $\sS_{(0,17)}$ is homogeneous, 
$\mu_2(x,y)=\nu(pr_x(x,y),\mu_0(x,y))$ is a strict homogeneous mean 
on $\sS_{(0,17)}$ by Lemma \ref{lem:comp}. 

\medskip\noindent\itref{item:ineq}
We have 
\begin{align*}\mu_1(x,y)- \mu_2(x,y)&
=2(m_A(x,\mu_0(x,y))-m_H(x,\mu_0(x,y)))
\ge0,\\
\mu_1(x,y)+\mu_2(x,y)&
=2m_H(x,\mu_0(x,y))>0,
\end{align*}
by using \eqref{eq:compound-ex-mu1,2} together with 
the positivity of $\mu_0(x,y)$ on $\sS_{(-1,\infty)}$. 
Since 
$$m_A(x,\mu_0(x,y))=m_H(x,\mu_0(x,y))\Leftrightarrow 
x=\mu_0(x,y) \Leftrightarrow x=y,$$  if $x\ne y$ then 
\eqref{eq:mu1>mu2} holds strictly.

If $(x,y)\in \sS_{(-1,1]}$ then $x>0$ and $-x<y\le x$, we have  
$$\mu_1(x,y)+\mu_2(x,y)
=2m_H(x,\mu_0(x,y))\ge 2m_H(m_A(x,y),m_A(x,y))=x+y\ (>0)
$$
since $x$ and $\mu_0(x,y)$ are greater than or equal to $m_A(x,y)$. 
\end{proof}

\begin{remark}
If $(x,y)\in \sS_{(-7/9,17)}$ then $(x,\mu_0(x,y)) \in \sS_{(1/3,3)}$.
\end{remark}


By using functions $\mu_1$ and $\mu_2$ on $\sS_{(-1,\infty)}$,  
we define a pair of sequences $\{x_n\}$ and $\{y_n\}$ 
for $(x,y)\in \sS_{(-1,\infty)}$ as 

\begin{equation}
\label{eq:sequences}
(x_0,y_0)=(x,y),\quad 
(x_{n+1},y_{n+1})=(\mu_1(x_{n},y_{n}),\mu_2(x_{n},y_{n})). 
\end{equation}

\begin{lemma}
\label{lem:limit}
The sequences $\{x_n\}$ and $\{y_n\}$
converge as $n\to \infty$, and satisfy   
$$\lim_{n\to \infty} x_n=\lim_{n\to \infty} y_n>0.$$
\end{lemma}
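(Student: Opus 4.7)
The plan is to first show that the iterates lie in $\sS_{(-1,1]}$ from $n\ge 1$ on, then establish that $\{x_n\}$ is decreasing and $\{y_n\}$ is increasing on this set, use a lower bound on the sum $x_n+y_n$ to force a strictly positive limit, and finally invoke strictness of $\mu_1$ to identify the two limits.

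First, by Proposition~\ref{prop:mu1mu2}\itref{item:ineq} we have $\mu_1(x,y)\ge\mu_2(x,y)$ and $\mu_1(x,y)+\mu_2(x,y)>0$ on $\sS_{(-1,\infty)}$; together with $\mu_1(x,y)>0$ (immediate from $\mu_1=m_A(x,\mu_0(x,y))$ and $x,\mu_0>0$ by Lemma~\ref{lem:mu0}), this gives $x_{n+1}>0$, $y_{n+1}\le x_{n+1}$, and $x_{n+1}+y_{n+1}>0$, i.e., $(x_{n+1},y_{n+1})\in\sS_{(-1,1]}\subset \sS_{(-1,\infty)}$ for every $n\ge0$, so the induction runs.

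On $\sS_{(-1,1]}$, since $\max(x,y)=x$, the mean inequality applied to $\mu_1$ yields $x_{n+1}\le x_n$, while \eqref{eq:increase-mu2} gives $y_{n+1}\ge y_n$. Hence for $n\ge 1$, $\{x_n\}$ is decreasing with $x_n>0$, and $\{y_n\}$ is increasing with $y_n\le x_n\le x_1$; both sequences converge, say to limits $L_y\le L_x$ with $L_x\ge 0$. Using the final inequality of Proposition~\ref{prop:mu1mu2}\itref{item:ineq}, namely $\mu_1+\mu_2\ge x+y$ on $\sS_{(-1,1]}$, the sum $x_n+y_n$ is non-decreasing for $n\ge 1$, so $L_x+L_y\ge x_1+y_1>0$; combined with $L_x\ge L_y$ this forces $L_x>0$ and so $(L_x,L_y)\in \sS_{(-1,\infty)}$.

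Finally, continuity of $\mu_1$ on $\sS_{(-1,\infty)}$ passes the recursion to the limit and gives $L_x=\mu_1(L_x,L_y)$; the strictness of $\mu_1$ (Proposition~\ref{prop:mu1mu2}\itref{item:means}) then forces $L_x=L_y$, finishing the proof. The main subtlety, rather than a genuine obstacle, is that one cannot invoke the compound-mean convergence result of Lemma~\ref{lem:converge} directly, because $\mu_2$ fails to be a mean on $\sS_{(-1,\infty)}$; instead one relies on the weaker monotonicity $\mu_2(x,y)\ge y$, valid on the smaller set $\sS_{(-1,1]}$ that the iterates enter after a single step, together with the extra lower bound on $\mu_1+\mu_2$ to prevent the common limit from being zero.
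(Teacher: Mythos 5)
Your proof is correct and follows essentially the same route as the paper's: the iterates enter $\sS_{(-1,1]}$ after one step, monotonicity plus boundedness give convergence, the inequality $\mu_1+\mu_2\ge x+y$ on $\sS_{(-1,1]}$ keeps the limiting sum positive, and passing the recursion $x_{n+1}=\mu_1(x_n,y_n)$ to the limit forces the two limits to agree. The only cosmetic difference is in the last step, where the paper solves the fixed-point equation explicitly via $\xi=\mu_0(\xi,\eta)$ (giving $\xi=\eta$ or $\xi=0$ and excluding the latter), while you invoke the abstract strictness of $\mu_1$ at the limit point $(L_x,L_y)\in\sS_{(-1,\infty)}$ --- both are valid.
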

\begin{proof}
For any $(x,y)\in \sS_{(-1,\infty)}$, 
$x_1=\mu_1(x,y)$ and $y_1=\mu_2(x,y)$  
satisfy $x_1>0$, $-x_1<y_1\le x_1$ by 
\eqref{eq:mu1>mu2} and \eqref{eq:mu1+mu2>0} in Proposition \ref{prop:mu1mu2}. 
Since $(x_1,y_1)\in \sS_{(-1,1]}$, 
and $\mu_1$ is a strict mean on $\sS_{(-1,\infty)}$, 
$x_2=\mu_1(x_1,y_1)$ satisfies 
$$0<x_2,\quad y_1\le x_2\le x_1.$$
Since $(x_1,y_1)\in \sS_{(-1,1]}$ and    
\eqref{eq:increase-mu2}, \eqref{eq:mu1>mu2}, \eqref{eq:mu1+mu2>0} in 
Proposition \ref{prop:mu1mu2},  
$y_2=\mu_2(x_1,y_1)$  
satisfies 
$$y_1\le y_2,\quad -x_2<y_2\le x_2.$$
Thus we have 
$$ (x_2,y_2)\in \sS_{(-1,1]},\quad y_1\le y_2\le x_2\le x_1.$$
Inductively, we have 
$$(x_n,y_n)\in \sS_{(-1,1]},\quad y_1\le \cdots \le y_n\le x_n\le \cdots\le x_1$$
for any $n\in \N$ and any $(x,y)\in \sS_{(-1,\infty)}$. 

Since the sequences $\{x_n\}$ and $\{y_n\}$ $(n\ge 1)$ 
are monotonous and bounded, 
they converge. We set 
$$\lim_{n\to \infty} x_n=\xi ,\quad \lim_{n\to \infty} y_n=\eta,
$$
which satisfy $\xi \ge \eta$. 
Since $x_n\ge y_n$ and $m_A(x_n,y_n)\le \mu_0(x_n,y_n)\le x_n$, we have
$$
m_A(x_{n+1},y_{n+1})=m_H(x_n,\mu_0(x_n,y_n))\ge m_H(m_A(x_n,y_n),m_A(x_n,y_n))= 
m_A(x_n,y_n)>0
$$
which yields $\xi +\eta>0$.
By considering the limit $n\to \infty$ for 
$$x_{n+1}=\mu_1(x_n,y_n),$$
we have 
$$\xi =\frac{\xi +\mu_0(\xi ,\eta)}{2} \quad \Leftrightarrow \quad \xi =\mu_0(\xi ,\eta)
\quad \Leftrightarrow \quad \xi ^2=\xi \cdot \frac{\xi +\eta}{2}. 
$$
The last equality is equivalent to $\xi =\eta$ or $\xi =0$. 
If $\xi =0$ then $0=\xi \ge \eta$, which contradicts $\xi +\eta>0$. 
Hence we have $\xi =\eta>0$. 
\end{proof}

\begin{theorem}
\label{th:lim-HGS-rep}
For the sequences $\{x_n\}$ and $\{y_n\}$ given in \eqref{eq:sequences}, 
there exists $N\in \N_0$ such that 
$0<y_n\le x_n$ for any $n\ge N$, 
and that  
$$\lim_{n\to \infty} x_n=\lim_{n\to \infty} y_n=
\frac{x_N}{F(\frac{1}{4},\frac{1}{4},1;1-\frac{y_N^2}{x_N^2})}. 
$$
\end{theorem}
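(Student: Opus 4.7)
The plan is to show that the quantity $\psi(x,y):=x/F(\frac{1}{4},\frac{1}{4},1;1-y^2/x^2)$ is invariant under the iteration $(x,y)\mapsto(\mu_1(x,y),\mu_2(x,y))$ on the set $\{(x,y):0<y\le x\}$, and then to pass to the limit using the trivial value $F(\frac{1}{4},\frac{1}{4},1;0)=1$. This mirrors the classical derivation of \eqref{eq:AGM-HGF} for the arithmetic-geometric mean, but with Theorem \ref{th:trans} playing the role of \eqref{eq:FE-Gauss}.

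First I would dispose of the existence of $N$. By the proof of Lemma \ref{lem:limit}, $y_1\le y_n\le x_n\le x_1$ for every $n\ge 1$ and both sequences tend to a common positive limit $\xi$. Hence some $N\ge 1$ satisfies $y_N>0$, and monotonicity then gives $0<y_n\le x_n$ for all $n\ge N$. Since $w_n:=y_n/x_n\to 1$ and the theorem only requires the existence of some $N$, I may enlarge $N$ further so that every $w_n$ with $n\ge N$ lies in the neighborhood of $1$ where the transformation formula \eqref{eq:FE:HG} is guaranteed to hold.

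For the invariance, the homogeneity of $\mu_1,\mu_2$ together with \eqref{eq:mu1,2} gives $x_{n+1}=x_n\mu_1(w_n)$ with $\mu_1(w_n)=(2+\sqrt{2+2w_n})/4$, and $w_{n+1}=\mu_2(w_n)/\mu_1(w_n)$. Substituting $w=w_n$ into \eqref{eq:FE:HG} therefore yields
\begin{equation*}
\mu_1(w_n)\,F(\frac{1}{4},\frac{1}{4},1;1-w_n^2)=F(\frac{1}{4},\frac{1}{4},1;1-w_{n+1}^2),
\end{equation*}
which after multiplication by $x_n$ rearranges to $\psi(x_n,y_n)=\psi(x_{n+1},y_{n+1})$, so $\psi(x_n,y_n)$ is constant for $n\ge N$. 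Letting $n\to\infty$, Lemma \ref{lem:limit} gives $w_n\to 1$ and $1-w_n^2\to 0$, and continuity of the hypergeometric series together with $F(\frac{1}{4},\frac{1}{4},1;0)=1$ then yields $\lim_{n\to\infty}\psi(x_n,y_n)=\xi=\lim_{n\to\infty}x_n$. Combining this with the invariance gives $\psi(x_N,y_N)=\lim_{n\to\infty}x_n$, which is exactly the claimed equality.

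The main (rather mild) obstacle is the domain of validity of \eqref{eq:FE:HG}: it is established only for $w$ near $1$, whereas $w_0$ is a priori anywhere in $(-1,\infty)$. I circumvent this by the $N$-enlargement step above, exploiting that the theorem only demands one suitable $N$ and that $w_n\to 1$. Alternatively, since for $w\in(0,1]$ both $1-w^2$ and $1-(\mu_2(w)/\mu_1(w))^2$ remain in the unit disk (a routine check), one can extend \eqref{eq:FE:HG} to the full interval $(0,1]$ by analytic continuation in $w$ and apply the invariance directly starting from any $N$ with $y_N>0$.
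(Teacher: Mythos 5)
Your proposal is correct and follows essentially the same route as the paper's first proof: both establish that $x/F(\frac{1}{4},\frac{1}{4},1;1-y^2/x^2)$ is invariant under one step of the iteration via Theorem \ref{th:trans} and the homogeneity of $\mu_1,\mu_2$, and then pass to the limit using $\lim_{z\to 1}F(\frac{1}{4},\frac{1}{4},1;1-z^2)=1$. Your explicit handling of the domain of validity of \eqref{eq:FE:HG} (enlarging $N$, or continuing the identity to $w\in(0,1]$) addresses a point the paper leaves implicit when it applies the transformation formula at $w=y_n/x_n$.
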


\begin{proof}
Since the sequences $\{x_n\}$ and $\{y_n\}$ satisfy $y_n\le x_n$ for 
$n\ge 1$, and converge to a positive real number
by Lemma \ref{lem:limit}, the number $N\in \N_0$ exists.  
We have 
\begin{align*}
\frac{x_N} {F(\frac{1}{4},\frac{1}{4},1;1-\frac{y_N^2}{x_N^2})}
&=\frac{m_1(x_N,y_N)}{F(\frac{1}{4},\frac{1}{4},1;1-\frac{m_2(x_N,y_N)^2}
{m_1(x_N,y_N)^2})}=\frac{x_{N+1}} 
{F(\frac{1}{4},\frac{1}{4},1;1-\frac{y_{N+1}^2}{x_{N+1}^2})}\\
&=\frac{x_{N+2}} 
{F(\frac{1}{4},\frac{1}{4},1;1-\frac{y_{N+2}^2}{x_{N+2}^2})}=
\cdots =\frac{x_{N+k}} 
{F(\frac{1}{4},\frac{1}{4},1;1-\frac{y_{N+k}^2}{x_{N+k}^2})}
\end{align*}
by Theorem \ref{th:trans}, our definition of $\mu_1,\mu_2$ 
in \eqref{eq:mu1,2} and $0<\frac{y_n}{x_n}\le \frac{y_{n+1}}{x_{n+1}}\le 1$
for $n\ge N$. Let $k$ tend to $\infty$ in the last term of 
the previous equality, then 
$$
\frac{x_N}{F(\frac{1}{4},\frac{1}{4},1;1-\frac{y_N^2}{x_N^2})}
= \lim_{n\to \infty}x_n,
$$
holds, since $\ds{\lim_{n\to\infty} x_n=\lim_{n\to\infty} y_n}$ and 
$\lim\limits_{z\to 1}F(\frac{1}{4},\frac{1}{4},1;1-z^2)=1$.  
 \end{proof}

\begin{proof}[Alternative Proof]
For the real numbers $x_N,y_N$, 
there exists a pure imaginary number $\tau\in \D_{12}$ such that 
$$1-\frac{y_N^2}{x_N^2}=\zeta(\tau)=\frac{4\h_{01}(\tau)^4\h_{10}(\tau)^4}
{\h_{00}(\tau)^8}=1-\Big(\frac{2\h_{01}(\tau)^4-\h_{00}(\tau)^4}
{\h_{00}(\tau)^4}\Big)^2
$$
by Fact \ref{fact:Inv-S-map} (\ref{item:zeta}). Thus we have a positive 
real number $\xi$ such that 
$$x_N=\xi\cdot \h_{00}(\tau)^2,\quad 
y_N=\xi\cdot \frac{2\h_{01}(\tau)^4-\h_{00}(\tau)^4}{\h_{00}(\tau)^2}.
$$
Since 
\begin{align*}
\mu_1(x_N,y_N)&=
\xi\cdot \mu_1\big(\h_{00}(\tau)^2,
\frac{2\h_{01}(\tau)^4-\h_{00}(\tau)^4}{\h_{00}(\tau)^2}\big)
=\xi\cdot \h_{00}(2\tau)^2,
\\
\mu_2(x_N,y_N)&=
\xi\cdot \mu_2\big(\h_{00}(\tau)^2,
\frac{2\h_{01}(\tau)^4-\h_{00}(\tau)^4}{\h_{00}(\tau)^2}\big)
=\xi\cdot \frac{2\h_{01}(2\tau)^4-\h_{00}(2\tau)^4}{\h_{00}(2\tau)^2}
\end{align*}
by our definition of $\mu_1$ and $\mu_2$,  $x_n$ and $y_n$ 
admit expressions 
$$x_{N+k}=\xi\cdot \h_{00}(2^k\tau)^2,\quad 
y_{N+k}=\xi\cdot 
\frac{2\h_{01}(2^k\tau)^4-\h_{00}(2^k\tau)^4}{\h_{00}(2^k\tau)^2}.$$
Hence we have 
$$
\lim_{n\to\infty} x_n=\xi\cdot \lim_{k\to \infty} \h_{00}(2^k\tau)^2
=\xi=\frac{x_N}{\h_{00}(\tau)^2}
=\frac{x_N}{F(\frac{1}{4},\frac{1}{4},1;1-\frac{y_N^2}{x_N^2})}
$$
by Theorem \ref{th:theta=HGS}. 
\end{proof}

\begin{remark}
For $(x,y)\in \sS_{(-1,\infty)}$ with $y<0$, 
we have $(x,-y)\in \sS_{(-1,\infty)}$ and 
$$F(\frac{1}{4},\frac{1}{4},1;1-\frac{y^2}{x^2})=
F(\frac{1}{4},\frac{1}{4},1;1-\frac{(-y)^2}{x^2}).
$$
However, the pair of sequences in \eqref{eq:sequences} with 
$(x_0,y_0)=(x,y)$ is different from that with $(x_0,y_0)=(x,-y)$, 
and their limits as $n\to \infty$ do not coincide in general.
\end{remark}


\end{document}